\documentclass{amsart}
\usepackage{amsmath,amssymb}
\usepackage{amsthm}

\theoremstyle{plain}
\newtheorem{theorem}{Theorem}[section]
\newtheorem{proposition}[theorem]{Proposition}
\newtheorem{lemma}[theorem]{Lemma}
\newtheorem{cor}[theorem]{Corollary}

\theoremstyle{remark}
\newtheorem{rem}[theorem]{Remark}
\newtheorem{ex}[theorem]{Example}
\newtheorem{definition}[theorem]{Definition}

\newcommand{\N}{{\mathbb N}}
\newcommand{\Z}{{\mathbb Z}}
\newcommand{\Q}{{\mathbb Q}}
\newcommand{\C}{{\mathbb C}}
\newcommand{\R}{{\mathbb R}}
\newcommand{\K}{{\mathbb K}}

\numberwithin{equation}{section}

\begin{document}

\title[Number systems over orders]{Number systems over orders}
\subjclass[2010]{11A63, 52C22}
\keywords{Number system, number field, order, tiling}

\author[Attila~Peth\H{o} and J\"org Thuswaldner]{Attila~Peth\H{o} and J\"org~Thuswaldner}
\address{Department of Computer Science, University of Debrecen,
H-4010 Debrecen, P.O. Box 12, HUNGARY and\\
University of Ostrava, Faculty of Science,
Dvo\v{r}\'{a}kova 7, 70103 Ostrava, CZECH REPUBLIC}
\email{Petho.Attila@inf.unideb.hu}
\address{Chair of Mathematics and Statistics,
University of Leoben, Franz-Josef-Strasse 18, A-8700 Leoben,
AUSTRIA}
\email{Joerg.Thuswaldner@unileoben.ac.at}
\thanks{Research of A.P. is supported in part by the OTKA grants NK104208, NK115479 and the FWF grant P27050. Research of J.T. is supported by the FWF grants P27050 and P29910.}
\date{\today}

\begin{abstract}
Let $\mathbb{K}$ be a number field of degree $k$ and let $\mathcal{O}$ be an order in $\mathbb{K}$. A \emph{generalized number system over $\mathcal{O}$} (GNS for short) is a pair $(p,\mathcal{D})$ where $p \in \mathcal{O}[x]$ is monic and $\mathcal{D}\subset\mathcal{O}$ is a complete residue system modulo $p(0)$ containing $0$. If each $a \in \mathcal{O}[x]$ admits a representation of the form
$a \equiv \sum_{j =0}^{\ell-1} d_j x^j \pmod{p}$
with $\ell\in\mathbb{N}$ and $d_0,\ldots, d_{\ell-1}\in\mathcal{D}$ then the GNS $(p,\mathcal{D})$ is said to have the \emph{finiteness property}.
To a given fundamental domain $\mathcal{F}$ of the action of $\mathbb{Z}^k$ on $\mathbb{R}^k$ we associate a class $\mathcal{G}_\mathcal{F} := \{ (p, D_\mathcal{F}) \;:\; p \in \mathcal{O}[x]  \}$ of GNS whose digit sets $D_\mathcal{F}$ are defined in terms of $\mathcal{F}$ in a natural way. We are able to prove general results on the finiteness property of GNS in $\mathcal{G}_\mathcal{F}$ by giving an abstract version of the well-known ``dominant condition'' on the absolute coefficient $p(0)$ of $p$. In particular, depending on mild conditions on the topology of $\mathcal{F}$ we characterize the finiteness property of $(p(x\pm m), D_\mathcal{F})$ for fixed $p$ and large $m\in\N$. Using our new theory, we are able to give general results on the connection between power integral bases of number fields and GNS.
\end{abstract}

\date{\today}

\maketitle

\section{Introduction}
In the present paper we introduce a general notion of number system defined over 
orders of number fields. This  generalizes the well-known {\em number systems} and {\em canonical number systems} in number fields to relative extensions and allows for the definition of ``classes'' of digit sets by the use of lattice tilings. It fits in the general framework of {\it digit systems over commutative rings} defined by Scheicher {\it et al.}~\cite{SSTW}.

Before the beginning of the 1990s canonical number systems have been defined as number systems that allow to represent elements of orders (and, in particular, rings of integers) in number fields. After the pioneering work of Knuth~\cite{Knuth:60} and Penney~\cite{Penney:65}, special classes of canonical number systems have been studied by K\'atai and Szab\'o~\cite{KS75}, K\'atai and Kov\'acs~\cite{KK1,KK2}, and Gilbert~\cite{G81}, while elements of a general theory are due to Kov\'acs \cite{Kovacs} as well as Kov\'acs and Peth\H{o}~\cite{KovacsPetho,KovacsPetho1992}. In 1991 Peth\H{o}~\cite{pethoe1991:polynomial_transformation_and} gave a more general definition of canonical number systems that reads as follows. Let $p\in\mathbb{Z}[x]$ be a monic polynomial and let $\mathcal{D}$ be a complete residue system of $\mathbb{Z}$ modulo $p(0)$ containing $0$. The pair $(p,\mathcal{D})$ was called a {\it number system} if each $a\in\mathbb{Z}[x]$ allows a representation of  the form
\begin{equation}\label{eq:zcns}
a \equiv d_0 + d_1x + \cdots + d_{\ell-1}x^{\ell-1} \pmod{p} \qquad (d_0,\ldots, d_{\ell-1} \in \mathcal{D}).
\end{equation}
If such a representation exists it is unique if we forbid  ``leading zeros'', {\it i.e.}, if we demand $d_{\ell-1}\not=0$ for $a\not\equiv 0 \pmod{p}$ and take the empty expansion for $a\equiv 0 \pmod{p}$ (note that the fact that $0\in\mathcal{D}$ is important for the unicity of the representation). It can be determined algorithmically by the so-called ``backward division mapping'' (see {\it e.g.}~\cite[Section~3]{ABBPT_I} or~\cite[Lemma~2.5]{SSTW}). Choosing the digit set $\mathcal{D}=\{0,1,\ldots,|p(0)|-1\}$, the pair $(p,\mathcal{D})$ was called a {\it canonical number system}, CNS for short. An overview about the early theory of number systems can be found for instance in Akiyama {\it et al.}~\cite{ABBPT_I} and Brunotte, Huszti, and Peth\H{o}~\cite{BHP}.

Let $p\in \mathbb{Z}[x]$ and let $\mathcal{D}$ be a complete residue system modulo $p(0)$. With the development of the theory of radix representations it became necessary to notationally distinguish an arbitrary pair $(p,\mathcal{D})$ from a particular pair $(p,\mathcal{D})$ for which each $a\in\mathbb{Z}[x]$ admits a representation of the form \eqref{eq:zcns}. Nowadays in the literature an arbitrary pair $(p,\mathcal{D})$ is called {\it number system} (or {\it canonical number system} if $\mathcal{D}=\{0,1,\ldots,|p(0)|-1\}$), while the fact that each $a\in\mathbb{Z}[x]$ admits a representation of the form \eqref{eq:zcns} is distinguished by the suffix {\em with finiteness property}. Although there exist many partial results on the characterization of number systems with finiteness property with special emphasis on canonical number systems (see for instance~\cite{AkiPet,AR,Brunotte02,Brunotte12,Burcsi-Kovacs:08,Kovacs,ST,W}), a complete description of this property seems to be out of reach (although there are fairly complete results for finite field analogs which can be found {\it e.g.} in~\cite{BBST,EHM,LP}).

If $(p,\mathcal{D})$ is a number system and $a\in\mathbb{Z}[x]$ admits a representation of the form \eqref{eq:zcns}, we call $\ell$ the \emph{length} of the representation of $a$ in this number system (for  $a\equiv 0 \pmod{p}$ this length is zero by definition).

In the present paper we generalize the CNS concept in two ways. Firstly, instead of looking at polynomials in $\mathbb{Z}[x]$ we consider polynomials with coefficients in some order $\mathcal{O}$ of a given number field of degree $k$, and secondly, we consider the sets of digits in a more general but uniform way (see Definition~\ref{def:GNS}). Indeed, for each fundamental domain $\mathcal{F}$ of the action of $\mathbb{Z}^k$ on $\mathbb{R}^k$ we define a class of number systems $(p,D_\mathcal{F})$ where $\mathcal{F}$ associates a digit set $D_\mathcal{F}$ with each polynomial $p\in \mathcal{O}[x]$ in a natural way. Thus for each fundamental domain $\mathcal{F}$ we can define a class $\mathcal{G}_\mathcal{F} := \{ (p, D_\mathcal{F}) \;:\; p \in \mathcal{O}[x]  \}$ of number systems whose properties will be studied.

Our main objective will be the investigation of the finiteness property (see Definition~\ref{def:fin}) for these number systems. For a given pair $(p,\mathcal{D})$ this property can be checked algorithmically ({\it cf.}~Theorem~\ref{non-ECNS1}). This makes it possible to prove a strong bound for the length of the representations (given in Theorem~\ref{thm:length}), provided it exists.

The ``dominant condition'', a condition for the finiteness property of $(p,\mathcal{D})$ that involves the largeness of the absolute coefficient of $p$, has been studied for canonical number systems in several versions for instance in Kov\'acs~\cite[Section~3]{Kovacs}, Akiyama and Peth\H{o}~\cite[Theorem 2]{AkiPet}, Scheicher and Thuswaldner~\cite[Theorem 5.8]{ST}, and Peth\H{o} and Varga~\cite[Lemma 7.3]{PV}. The main difficulty of the generalization of the dominant condition is due to the fact that in $\mathcal{O}$ we do not have a natural ordering, hence, we cannot adapt the methods that were used in the case of integer polynomials. However, by exploiting tiling properties of the fundamental domain $\mathcal{F}$ we are able to overcome this difficulty, and provide a general criterion for the finiteness property (see Theorem~\ref{th:1}) that is in the spirit of the dominant condition and can be used in the proofs of our main results. In particular, using this criterion, depending on natural properties of $\mathcal{F}$ we are able to show that $(p(x+m), D_\mathcal{F})$ enjoys a finiteness property for each given $p$ provided that $m$ (or $|m|$) is large enough. This forms a generalization of an analogous result of Kov\'acs~\cite{Kovacs} to this general setting (see Theorem~\ref{th:newKovacs} and its consequences).  In Theorem~\ref{non-ECNSmain} we give a converse of this result by showing that $(p(x-m), D_\mathcal{F})$ doesn't enjoy the finiteness property for large $m$ if $\mathcal{F}$ has certain properties.

If $p\in\Z[x]$ is irreducible then $\mathbb{Z}[x]/(p)$ is isomorphic to $\mathbb{Z}[\alpha]$ for any root $\alpha$ of $p$. Thus in this case the finiteness property of $(p,\mathcal{D})$ is easily seen to be equivalent to the fact that each $\gamma\in\mathbb{Z}[\alpha]$ admits a unique expansion of the form
\begin{equation}\label{eq:cnsirr}
\gamma=d_0+d_1\alpha +\cdots+ d_{\ell-1}\alpha^{\ell-1}
\end{equation}
with analogous conditions on $d_0,\ldots, d_{\ell-1}\in\mathcal{D}$ as in \eqref{eq:zcns}. In this case we sometimes write $(\alpha,\mathcal{D})$ instead of $(p,\mathcal{D})$, see Section~\ref{sec:6} (note that $|p(0)|=|N_{\mathbb{Q}(\alpha)/\mathbb{Q}}(\alpha)|$). This relates number systems to the problem of \emph{power integral bases} of orders. Recall that the order $\mathcal{O}$ has a power integral basis, if there exists $\alpha\in \mathcal{O}$ such that each $\gamma \in \mathcal{O}$ can be written uniquely in the form
$$
\gamma = g_0+g_1\alpha + \dots + g_{k-1} \alpha^{k-1}
$$
with $g_0,\dots,g_{k-1} \in \Z$. In this case $\mathcal{O}$ is called {\it monogenic}. The definitions of number system with finiteness property \eqref{eq:cnsirr} and power integral bases seem similar and indeed there is a strong relation between them. Kov\'acs~\cite[Section~3]{Kovacs} proved that the order $\mathcal{O}$ has a power integral basis if and only if it contains $\alpha$ such that $(\alpha,\{0,\ldots,|N_{\mathbb{Q}(\alpha)/\mathbb{Q}}(\alpha)|-1\})$ is a CNS with finiteness property. A deep result of Gy\H{o}ry~\cite{Gyory} states that, up to translation by integers, $\mathcal{O}$ admits finitely many power integral bases and they are effectively computable. Combining this result of Gy\H{o}ry with the above mentioned theorem of Kov\'acs~\cite[Section~3]{Kovacs}, Kov\'acs and Peth\H{o}~\cite{KovacsPetho}
proved that if $1,\alpha,\dots,\alpha^{k-1}$ is a power integral basis then, up to finitely many possible exceptions, $(\alpha-m, \mathcal{N}_0(\alpha-m)), m\in \Z$ is a CNS with finiteness property if and only if $m>M(\alpha)$, where $M(\alpha)$ denotes a constant. A good overview over this circle of ideas is provided in the book of Evertse and Gy\H{o}ry~\cite{Evertse_Gyory}.

Using Theorem \ref{th:newKovacs} we generalize the results of Kov\'acs \cite{Kovacs} and of Kov\'acs and Peth\H{o}~\cite{KovacsPetho} to number systems over orders in algebraic number fields, see especially Corollary~\ref{cor:newKovacs}. Our result is not only more general as the earlier ones, but sheds fresh light to the classical case of number systems over $\Z$ too. It turns out (see Theorem~\ref{thm:newKovacsPetho}) that under general conditions in orders of algebraic number fields the power integral bases and the bases of number systems with finiteness condition up to finitely many, effectively computable exceptions coincide. Choosing for example the symmetric digit set, the conditions of Theorem~\ref{thm:newKovacsPetho} are satisfied and, hence, power integral bases and number systems coincide up to finitely many exceptions. This means that CNS are quite exceptional among number systems.

\section{Number systems over orders of number fields}

In this section we define number systems over orders and study some of their basic properties. This new notion of number system generalizes the well-known {\it canonical number systems} defined by Peth\H{o}~\cite{pethoe1991:polynomial_transformation_and} that we mentioned in the introduction. Before we give the exact definition, we introduce some notation.

Let $\mathbb{K}$ be a number field of degree $k$. The field $\K$ has $k$ isomorphisms to $\C$, whose images are called its (algebraic) conjugates and are denoted by $\K^{(1)}, \dots, \K^{(k)}$. We denote by $\alpha^{(j)}$ the image of $\alpha\in \K$ in $\K^{(j)}$, $j=1,\dots,k$. Let $\mathcal{O}$ be an order in $\mathbb{K}$, {\it i.e.}, a ring which is a $\mathbb{Z}$-module of full rank in $\mathbb{K}$. For $a(x) = \sum_{l=0}^{n} a_l x^l\in \mathcal{O}[x]$ the quantity $H(a) = \max\{|a_l^{(j)}|,\; l=0,\dots,n, \; j =1,\dots,k \}$ is called the {\it height} of $a$.

\begin{definition}[Generalized number system]\label{def:GNS}
Let $\mathbb{K}$ be a number field of degree $k$ and let $\mathcal{O}$ be an order in $\mathbb{K}$. A \emph{generalized number system over $\mathcal{O}$} (GNS for short) is a pair $(p,\mathcal{D})$, where $p \in \mathcal{O}[x]$ is monic and $\mathcal{D}\subset\mathcal{O}$ is a complete residue system in $\mathcal{O}$ modulo $p(0)$ containing $0$. The polynomial $p$ is called \emph{basis} of this number system, $\mathcal{D}$ is called its set of \emph{digits}.
\end{definition}

\begin{rem}
Note that GNS fit in the more general framework of {\it digit systems over a commutative ring} defined in Scheicher {\it et al.}~\cite{SSTW}. Indeed, \cite[Example~6.6]{SSTW} provides an example of a digit system over a commutative ring that corresponds to the case $\mathcal{O}=\Z[i]$ of our definition and uses rational integers as digits. Our more specialized setting allows us to prove results that are not valid for arbitrary commutative rings.
\end{rem}

Let 
\begin{equation}\label{eq:basisOmega}
\omega_1=1,\omega_2,\ldots,\omega_k
\end{equation}
be a $\mathbb{Z}$-basis of $\mathcal{O}$ and arrange this basis in a vector 
\begin{equation}\label{eq:BasisVector}
\boldsymbol{\omega}=(\omega_1,\omega_2,\ldots,\omega_k). 
\end{equation}
Let $\mathcal{F}$ be a bounded fundamental domain for the action of $\mathbb{Z}^k$ on $\mathbb{R}^k$, {\it i.e.}, a set that satisfies $\mathbb{R}^k=\mathcal{F}+\mathbb{Z}^k$ without overlaps, and assume that $\mathbf{0}\in \mathcal{F}$. Such a fundamental domain defines a set of digits in a natural way. Indeed, for $\vartheta\in\mathcal{O}$ define
\begin{equation}\label{eq:DF}
D_{\mathcal{F},\vartheta} = (\vartheta\cdot(\mathcal{F}\cdot\boldsymbol{\omega})) \cap \mathcal{O},
\end{equation}
where $\vartheta\cdot (\mathcal{F} \cdot {\bf \omega}) = \Big\{  \vartheta \sum_{j=1}^k f_j \omega_j \; :\; (f_1,\dots,f_k) \in \mathcal{F}\Big\}$.
Note that $D_{\mathcal{F},\vartheta}$ depends on the vector $\boldsymbol{\omega}$, {\it i.e.}, on the basis \eqref{eq:basisOmega}.

\begin{lemma}
For $\vartheta\in\mathcal{O}$ the set $D_{\mathcal{F},\vartheta}$ is a complete residue system modulo $\vartheta$.
\end{lemma}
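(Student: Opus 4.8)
The plan is to transport the whole statement to $\R^k$ by means of the coordinate isomorphism attached to the basis $\boldsymbol{\omega}$, where it turns into a transparent assertion about lattice tilings. First I would introduce the $\R$-linear isomorphism $\varphi\colon \K\otimes_\Q\R\to\R^k$ determined by $\sum_{j=1}^k x_j\omega_j\mapsto(x_1,\dots,x_k)$; since $\omega_1,\dots,\omega_k$ is a $\Z$-basis of $\mathcal{O}$, it restricts to a $\Z$-module isomorphism $\mathcal{O}\to\Z^k$, and by construction $\varphi(\mathcal{F}\cdot\boldsymbol{\omega})=\mathcal{F}$. Assuming $\vartheta\neq0$ (for $\vartheta=0$ there is nothing to prove), multiplication by $\vartheta$ maps $\mathcal{O}$ into itself because $\mathcal{O}$ is a ring, so in the basis $\boldsymbol{\omega}$ it is represented by a matrix $M\in\Z^{k\times k}$ with $\det M=N_{\K/\Q}(\vartheta)\neq0$; being $\R$-linear, it satisfies $\varphi(\vartheta v)=M\varphi(v)$ for all $v\in\K\otimes_\Q\R$. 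Consequently $\varphi(D_{\mathcal{F},\vartheta})=M\mathcal{F}\cap\Z^k$ and $\varphi(\vartheta\mathcal{O})=M\Z^k$, so the lemma reduces to showing that $M\mathcal{F}\cap\Z^k$ contains exactly one representative of each coset of $M\Z^k$ in $\Z^k$, i.e.\ that the natural map $M\mathcal{F}\cap\Z^k\to\Z^k/M\Z^k$ is a bijection.

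The second step is the observation that $M\mathcal{F}$ is a fundamental domain for the action of the lattice $M\Z^k$ on $\R^k$: applying the linear bijection $M$ to $\R^k=\mathcal{F}+\Z^k$ gives $\R^k=M\mathcal{F}+M\Z^k$, still without overlaps, so every point of $\R^k$ lies in exactly one translate $M\mathcal{F}+w$ with $w\in M\Z^k$. From this, surjectivity of the map above is immediate: given $z\in\Z^k$ write $z=y+w$ with $y\in M\mathcal{F}$ and $w\in M\Z^k$; since $M$ has integer entries we have $M\Z^k\subseteq\Z^k$, so $w\in\Z^k$, hence $y=z-w\in M\mathcal{F}\cap\Z^k$ and $y\equiv z\pmod{M\Z^k}$. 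For injectivity, if $y_1,y_2\in M\mathcal{F}\cap\Z^k$ with $y_1-y_2\in M\Z^k$, then $y_1\in M\mathcal{F}$ and $y_1\in M\mathcal{F}+(y_1-y_2)$, so uniqueness of the tiling forces $y_1-y_2=0$.

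I do not anticipate a real obstacle; the one point that deserves care is to use the hypothesis ``without overlaps'' in the strong sense that $\mathcal{F}$ — and therefore $M\mathcal{F}$ — is a genuine transversal for the lattice action, each point of $\R^k$ lying in precisely one translate. That single fact delivers both existence (surjectivity) and uniqueness (injectivity) of the digit representing a given residue class; the remainder is the routine identification of $\mathcal{O}$ with $\Z^k$, of $\vartheta\mathcal{O}$ with $M\Z^k$, and of $D_{\mathcal{F},\vartheta}$ with $M\mathcal{F}\cap\Z^k$ via $\boldsymbol{\omega}$, together with the elementary fact that an invertible linear map sends a fundamental domain of $\Z^k$ to one of its image lattice.
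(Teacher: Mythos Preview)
Your proof is correct and is essentially the same argument as the paper's, just repackaged through the coordinate isomorphism $\varphi$ and the multiplication matrix $M$: where the paper divides by $\vartheta$, writes $\beta/\vartheta=\mathbf{b}\cdot\boldsymbol{\omega}$, and uses that $\mathbf{b}$ lies in a unique $\Z^k$-translate of $\mathcal{F}$, you stay in $\Z^k$ and use that $M\mathcal{F}$ tiles $\R^k$ by $M\Z^k$-translates. Both existence and uniqueness reduce to the same tiling property of $\mathcal{F}$, so the two arguments are the same up to applying $M$ (or its inverse).
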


\begin{proof}
  Let $\beta\in \mathcal{O}$. Then, because $\frac{\beta}{\vartheta} \in \K$, and the entries of $\boldsymbol{\omega}$ form a $\Q$-basis of $\K$, there exists $\mathbf{b} \in \Q^k$ with $\frac{\beta}{\vartheta} = \mathbf{b}\cdot \boldsymbol{\omega}$. Since $\mathcal{F}$ is a fundamental domain for the action of $\mathbb{Z}^k$ on $\mathbb{R}^k$ there is a unique vector $\mathbf{m}\in \mathbb{Z}^k$ satisfying $\mathbf{b} \in \mathcal{F} + \mathbf{m}$. Let $\mu=\mathbf{m}\cdot \boldsymbol{\omega}$, then $\mu\in\mathcal{O}$. Setting $\nu=\beta-\mu \vartheta$ we have $\nu\in\mathcal{O}$ and $\nu = \vartheta\cdot((\mathbf{b}-\mathbf{m})\cdot \boldsymbol{\omega})$ with $\mathbf{b}-\mathbf{m}\in\mathcal{F}$, hence, $\nu \in D_{\mathcal{F},\vartheta}$. Thus for each $\beta \in \mathcal{O}$ there is $\nu \in D_{\mathcal{F},\vartheta}$ such that $\nu \equiv \beta \pmod \vartheta$.

Let now $\tau_1,\tau_2\in D_{\mathcal{F},\vartheta}$ be given in a way that $\tau_1 \equiv \tau_2 \pmod \vartheta$. Then there is $\mathbf{r}_\ell\in\mathcal{F}$ such that $\frac{\tau_\ell}{\vartheta} = \mathbf{r}_\ell\cdot \boldsymbol{\omega}$ for $\ell\in\{1,2\}$.
As $\frac{\tau_1-\tau_2}{\vartheta} \in \mathcal{O}$, the difference $\mathbf{r}_1-\mathbf{r}_2$ is in $\Z^k$. Because $\mathcal{F}$ is a fundamental domain for the action of $\mathbb{Z}^k$ on $\mathbb{R}^k$ this is only possible if $\mathbf{r}_1-\mathbf{r}_2=\mathbf{0}$.
Thus $\tau_1=\tau_2$.
\end{proof}

We now show that each digit set $\mathcal{D}$ is of the form \eqref{eq:DF} for a suitable choice of the fundamental domain $\mathcal{F}$.

\begin{lemma}\label{lem:domaindigit}
Let $(p,\mathcal{D})$ be a GNS over $\mathcal{O}$. Then there is a  bounded fundamental domain $\mathcal{F}$ for the action of $\mathbb{Z}^k$ on $\mathbb{R}^k$ such that $\mathcal{D}=D_{\mathcal{F},p(0)}$.
\end{lemma}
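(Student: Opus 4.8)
The plan is to push everything over to $\mathbb{R}^k$ via the coordinate isomorphism attached to $\boldsymbol{\omega}$ and thereby reduce the lemma to a purely lattice-theoretic statement. Write $\vartheta:=p(0)$, which is nonzero since $\mathcal{D}$ is a (finite) complete residue system modulo $\vartheta$. As $\boldsymbol{\omega}$ is at once a $\mathbb{Q}$-basis of $\mathbb{K}$ and a $\mathbb{Z}$-basis of $\mathcal{O}$, the map $\iota\colon\mathbb{K}\to\mathbb{Q}^k$, $\iota(\mathbf{b}\cdot\boldsymbol{\omega})=\mathbf{b}$, is a $\mathbb{Q}$-linear isomorphism restricting to a bijection $\mathcal{O}\to\mathbb{Z}^k$. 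Unwinding \eqref{eq:DF}, an element $\nu\in\mathcal{O}$ lies in $D_{\mathcal{F},\vartheta}$ if and only if $\nu/\vartheta=\mathbf{f}\cdot\boldsymbol{\omega}$ for some $\mathbf{f}\in\mathcal{F}$, i.e. if and only if $\iota(\nu/\vartheta)\in\mathcal{F}$. Since $\nu\mapsto\iota(\nu/\vartheta)$ is a bijection from $\mathcal{O}$ onto $L:=\iota\big(\tfrac1\vartheta\mathcal{O}\big)$, the set $D_{\mathcal{F},\vartheta}$ is completely determined by the intersection $\mathcal{F}\cap L$; so it suffices to exhibit a bounded fundamental domain $\mathcal{F}$ for the $\mathbb{Z}^k$-action on $\mathbb{R}^k$ for which $\mathcal{F}\cap L$ is a prescribed set.

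Next I would record the relevant lattice data. From $\vartheta\mathcal{O}\subseteq\mathcal{O}$ we get $\mathbb{Z}^k=\iota(\mathcal{O})\subseteq L$, and $L$ is a full-rank lattice with $[L:\mathbb{Z}^k]=[\tfrac1\vartheta\mathcal{O}:\mathcal{O}]=[\mathcal{O}:\vartheta\mathcal{O}]=|\mathcal{D}|$ (the last equality because $\mathcal{D}$ is a complete residue system modulo $\vartheta$; the middle one because multiplication by $\vartheta$ is a group isomorphism $\tfrac1\vartheta\mathcal{O}\to\mathcal{O}$ carrying $\mathcal{O}$ onto $\vartheta\mathcal{O}$). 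Moreover the bijection $\mathcal{O}\to L$, $\nu\mapsto\iota(\nu/\vartheta)$, transforms congruence modulo $\vartheta$ on $\mathcal{O}$ into congruence modulo $\mathbb{Z}^k$ on $L$, since $\nu\equiv\nu'\pmod{\vartheta}$ means $(\nu-\nu')/\vartheta\in\mathcal{O}$, i.e. $\iota(\nu/\vartheta)-\iota(\nu'/\vartheta)\in\mathbb{Z}^k$. Hence $G:=\{\iota(\nu/\vartheta)\colon\nu\in\mathcal{D}\}$ is a complete set of coset representatives for $L/\mathbb{Z}^k$, and $\mathbf{0}\in G$ because $0\in\mathcal{D}$. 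By the first paragraph, it now suffices to build a bounded fundamental domain $\mathcal{F}$ for $\mathbb{Z}^k$ with $\mathcal{F}\cap L=G$, for then $D_{\mathcal{F},\vartheta}=\mathcal{D}$.

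To produce such an $\mathcal{F}$ I would use that $\mathbb{Z}^k\subseteq L$ with finite index. Fix a bounded fundamental domain $P$ for the action of the finer lattice $L$ on $\mathbb{R}^k$ with $\mathbf{0}\in P$, e.g. the half-open fundamental parallelepiped spanned by a $\mathbb{Z}$-basis of $L$; then $P\cap L=\{\mathbf{0}\}$. Put
\[
\mathcal{F}:=\bigcup_{\mathbf{g}\in G}(P+\mathbf{g}).
\]
This $\mathcal{F}$ is bounded ($G$ finite, $P$ bounded) and contains $\mathbf{0}$. From $L=\bigsqcup_{\mathbf{g}\in G}(\mathbb{Z}^k+\mathbf{g})$ and $\mathbb{R}^k=\bigsqcup_{\lambda\in L}(P+\lambda)$ one regroups to get $\mathbb{R}^k=\bigsqcup_{\mathbf{z}\in\mathbb{Z}^k}\bigcup_{\mathbf{g}\in G}(P+\mathbf{g}+\mathbf{z})=\bigsqcup_{\mathbf{z}\in\mathbb{Z}^k}(\mathcal{F}+\mathbf{z})$, which shows that $\mathcal{F}$ is a fundamental domain for $\mathbb{Z}^k$ on $\mathbb{R}^k$. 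Finally, for $\lambda\in L$ one has $\lambda\in\mathcal{F}$ iff $\lambda-\mathbf{g}\in P\cap L=\{\mathbf{0}\}$ for some $\mathbf{g}\in G$, i.e. iff $\lambda\in G$; hence $\mathcal{F}\cap L=G$ and we are done.

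The only point requiring a little care is the overlap-freeness in the regrouping step, which comes down to checking that $(\mathbf{g},\mathbf{z})\mapsto\mathbf{g}+\mathbf{z}$ is a bijection $G\times\mathbb{Z}^k\to L$ (injectivity uses that distinct elements of $G$ lie in distinct cosets of $\mathbb{Z}^k$) together with the fact that $P+\lambda=P+\lambda'$ forces $\lambda=\lambda'$ for $\lambda,\lambda'\in L$. Both are routine, and everything else is a direct unwinding of \eqref{eq:DF}; so I expect no genuine obstacle. The real content of the lemma is just the observation that prescribing the digit set is the same as prescribing how a fundamental domain of $\mathbb{Z}^k$ meets the refined lattice $\tfrac1\vartheta\mathcal{O}$, and the latter is easy precisely because that lattice contains $\mathbb{Z}^k$ with finite index.
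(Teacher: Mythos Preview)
Your proof is correct and follows essentially the same idea as the paper's: both construct $\mathcal{F}$ as a union, indexed by $\mathcal{D}$, of translates of a fundamental domain for the finer lattice $\iota(\tfrac1\vartheta\mathcal{O})$. The paper chooses that fundamental domain concretely as $P^{-1}[-\tfrac12,\tfrac12)^k$ (with $P$ the matrix of multiplication by $p(0)$) and leaves the verification to the reader, whereas you allow an arbitrary such domain and spell out the lattice bookkeeping in full.
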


\begin{proof}
Let $\iota: \mathbb{K} \to \mathbb{R}^k$ be the embedding defined by $\mathbf{r}\cdot \boldsymbol{\omega} \mapsto \mathbf{r}$. Then there is a unique matrix $P$ satisfying $\iota(p(0) \vartheta)=P\iota(\vartheta)$ for all $\vartheta \in \mathbb K$. For each $d=\sum d_j \omega_j \in \mathcal{D}$ define a cube $C_d= \prod [d_j-\frac12,d_j+\frac12)$. Then set $\mathcal F = P^{-1}\bigcup_{d\in \mathcal{D}} C_d$. It is easily verified that this is a fundamental domain satisfiying $\mathcal{D}=D_{\mathcal{F},p(0)}$.
\end{proof}

If the polynomial $p$ is clear from the context we will use the abbreviation 
\[
D_{\mathcal{F},p(0)}=D_{\mathcal{F}}.
\]
We call a fundamental domain $\mathcal{F}$ satisfying the claim of Lemma~\ref{lem:domaindigit} a \emph{fundamental domain associated with $(p,\mathcal{D})$}. In view of this lemma we may assume in the sequel that a number system $(p,\mathcal{D})$ has an associated fundamental domain $\mathcal{F}$. On the other hand, a fixed fundamental domain $\mathcal{F}$ defines a whole class of GNS, namely,
\[
\mathcal{G}_\mathcal{F} := \{ (p, D_\mathcal{F}) \;:\; p \in \mathcal{O}[x]  \}.
\]

\begin{ex}\label{ex:ex}
We consider some special choices of $\mathcal{F}$ corresponding to families $\mathcal{G}_\mathcal{F}$ studied in the literature.
\begin{description}
\item[\it Classical CNS] Canonical number systems as defined in the introduction form a special case of GNS. Indeed let $\mathbb{K}=\mathbb{Q}$ and $\mathcal{O}=\mathbb{Z}$. Then $k=1$, since $\mathbb{Q}$ has degree $1$ over $\mathbb{Q}$. Now we choose $\mathcal{F}=[0,1)$ which obviously is a fundamental domain of $\mathbb{Z}$ acting on $\mathbb{R}$. We look at the class
$
\mathcal{G}_\mathcal{F} := \{ (p, D_\mathcal{F}) \;:\; p \in \mathbb{Z}[x]  \}.
$
Since for  $\vartheta\in\mathbb{N}$ with $\vartheta\ge 2$ we have
\[
D_{\mathcal{F},\vartheta} = \left\{\tau\in \mathbb{Z} \; : \; \frac{\tau}{\vartheta} = r, r \in [0,1) \right\}=\{0,\ldots, \vartheta-1\},
\]
which is the digit set of a canonical number system, we see that the class $\mathcal{G}_\mathcal{F}$ coincides with the set of canonical number systems in this case.

If, however, $\vartheta\in\mathbb{Z}$ with $\vartheta \le -2$ then
\[
D_{\mathcal{F},\vartheta} = \left\{\tau\in \mathbb{Z} \; : \; \frac{\tau}{\vartheta} = r, r \in [0,1) \right\}=\{\vartheta+1, \ldots, 0,\} = -\{0,\ldots, |\vartheta|-1\}.
\]

\item[\it Symmetric CNS] Symmetric CNS are defined in the same way as CNS, apart from the digit set. Indeed, $(p,\mathcal{D})$ is a symmetric CNS if $p\in\mathbb{Z}[x]$ and $\mathcal{D}=\big[-\frac{|p(0)}2,\frac{|p(0)|}2\big) \cap \mathbb{Z}$. These number systems were studied for instance by Akiyama and Scheicher~\cite[Section~2]{Akiyama-Scheicher:07} and Brunotte~\cite{Brunotte09} (see also K\'atai~\cite[Theorem~7]{Katai:95} for a slightly shifted version and Scheicher {\it et al.}~\cite[Definition~5.5]{SSTW} for a more general setting). They are easily seen to be equal to the class $\mathcal{G}_\mathcal{F} := \{ (p, D_\mathcal{F}) \;:\; p \in \mathbb{Z}[x]  \}$ with $\mathcal{F}=[-\frac12,\frac12)$ of GNS.

\item[\it The sail] Let $\mathbb{K}=\Q(\sqrt{-D})$ with $D\in\{1,2,3,7,11\}$ be an Euclidean quadratic field with ring of integers ({\it i.e.}, maximal order) $\mathcal{O}$ and set
\[
    \omega = \left\{ \begin{array}{ll}
                       \sqrt{-D}, & \mbox{if}\; -D\equiv 2,3 \pmod{4},  \\
                       \frac{1+\sqrt{-D}}{2}, &  \mbox{otherwise}.
                     \end{array} \right.
    \]
Defining
\[
\mathcal{F}_{\omega}= \Big\{ (r_1,r_2)\in \R^2 \;:\;  |r_1+r_2\omega| <1, |r_1-1+r_2\omega|\ge 1, \; -\frac12\le r_2< \frac12
  \Big\}
\]
(this set looks a bit like a sail) one immediately checks that in  Peth\H{o} and Varga~\cite{PV} the class of GNS $\mathcal{G}_\mathcal{F} := \{ (p, D_\mathcal{F}) \;:\; p \in \mathcal{O}[x]  \}$ with $\mathcal{F} = \mathcal{F}_{\omega}$ is investigated. Using the modified fundamental domain
\[
\mathcal{F}_{\omega}= \Big\{ (r_1,r_2)\in \R^2 \;:\;  ||(r_1,r_2)||_2 <1, ||(r_1-1,r_2)||_2\ge 1, \; -\frac12\le r_2< \frac12
  \Big\}
\]
even yields a class of GNS for any imaginary quadratic number field.
\item[\it The square] As a last example we mention the number systems over $\mathbb{Z}[i]$ studied by Jacob and Reveilles~\cite{JR} and Brunotte {\it et al.}~\cite{BKT}. They correspond to the class $\mathcal{G}_\mathcal{F} := \{ (p, D_\mathcal{F}) \;:\; p \in \mathcal{O}[x]  \}$ of GNS with $K=\mathbb{Q}(i)$, $\mathcal{O}=\mathbb{Z}[i]$, and $\mathcal{F}=[0,1)^2$.
\end{description}
\end{ex}

A fundamental domain $\mathcal{F}$ induces by definition a tiling of $\mathbb{R}^k$ by $\mathbb{Z}^k$-translates which in turn induces the following neighbor relation on $\mathbb{Z}^k$. We call $\mathbf{z}'\in \mathbb{Z}^k$ a \emph{neighbor} of $\mathbf{z}\in \mathbb{Z}^k$ if $\mathcal{F}+\mathbf{z}$ ``touches'' $\mathcal{F}+\mathbf{z}'$, {\it i.e.}, if $(\overline{\mathcal{F}} + \mathbf{z}) \cap (\overline{\mathcal{F}}+\mathbf{z}') \not= \emptyset$. Note that also $\mathbf{z}$ itself is a neighbor of $\mathbf{z}$.  Let 
\begin{equation}\label{eq:neighbours}
\mathcal{N} = \{ \mathbf{z} \in \mathbb{Z}^k  \;:\;   \overline{\mathcal{F}} \cap (\overline{\mathcal{F}}+\mathbf{z}) \not= \emptyset\}
\end{equation}
be the set of neighbors of $\mathbf 0$.
We need the following easy result.

\begin{lemma}\label{lem:neighborbasis}
The set of neighbors $\mathcal{N}$ of $\mathcal{F}$ contains a basis of the lattice $\mathbb{Z}^k $.
\end{lemma}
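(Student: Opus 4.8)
The plan is to exploit the fact that $\mathcal{F}$ is a bounded fundamental domain, so it has positive Lebesgue measure (in fact measure $1$, up to boundary issues), together with a connectedness-type argument via the tiling. First I would argue that $\mathcal{N}$ spans $\mathbb{R}^k$ as a real vector space. Indeed, suppose not; then $\mathcal{N}$ lies in a proper rational subspace $V\subsetneq\mathbb{R}^k$. Since $\overline{\mathcal{F}}$ is bounded, there is $R>0$ with $\overline{\mathcal{F}}\subseteq B(0,R)$. Consider the ``patch'' $T_n=\bigcup_{\mathbf{z}\in(\mathbb{Z}^k\cap V),\ |\mathbf{z}|\le n}(\overline{\mathcal{F}}+\mathbf{z})$. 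I claim $T_n$ cannot meet $\overline{\mathcal{F}}+\mathbf{w}$ for any $\mathbf{w}\in\mathbb{Z}^k\setminus V$: if $(\overline{\mathcal{F}}+\mathbf{z})\cap(\overline{\mathcal{F}}+\mathbf{w})\neq\emptyset$ with $\mathbf{z}\in V$, then $\mathbf{w}-\mathbf{z}\in\mathcal{N}\subseteq V$, forcing $\mathbf{w}\in V$. Hence the closed set $S:=\bigcup_{\mathbf{z}\in\mathbb{Z}^k\cap V}(\overline{\mathcal{F}}+\mathbf{z})$ is disjoint from $\bigcup_{\mathbf{w}\in\mathbb{Z}^k\setminus V}(\mathcal{F}+\mathbf{w})$, yet it is contained in the $R$-neighbourhood of $V$, which has infinite complement; this contradicts $\mathbb{R}^k=\mathcal{F}+\mathbb{Z}^k$ once we note the translates outside $V$ must cover points arbitrarily far from $V$. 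So $\mathrm{span}_{\mathbb{R}}\mathcal{N}=\mathbb{R}^k$, and since $\mathcal{N}\subseteq\mathbb{Z}^k$ we get $\mathrm{rank}_{\mathbb{Z}}(\mathbb{Z}\mathcal{N})=k$, i.e.\ $\mathcal{N}$ contains $k$ linearly independent vectors.

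The remaining step is to upgrade ``$\mathcal{N}$ contains $k$ independent vectors of $\mathbb{Z}^k$'' to ``$\mathcal{N}$ contains a $\mathbb{Z}$-basis of $\mathbb{Z}^k$''. This is where one must be a little careful: a priori the sublattice $\Lambda$ generated by $\mathcal{N}$ could have index $>1$. To rule this out, I would use that $\mathcal{N}$ is symmetric ($\mathbf{z}\in\mathcal{N}\iff-\mathbf{z}\in\mathcal{N}$, immediate from the definition) and, more importantly, that the translates $\{\mathcal{F}+\mathbf{z}:\mathbf{z}\in\Lambda\}$ would then fail to meet translates $\mathcal{F}+\mathbf{w}$ for $\mathbf{w}\notin\Lambda$ by exactly the adjacency argument above — but the $\Lambda$-translates of $\overline{\mathcal F}$ have density $1/[\mathbb{Z}^k:\Lambda]<1$ and so cannot be ``separated'' from the rest while still every point of $\mathbb{R}^k$ lies in some $\mathbb{Z}^k$-translate; repeating the neighbour argument with $\Lambda$ in place of $V\cap\mathbb{Z}^k$ forces $[\mathbb{Z}^k:\Lambda]=1$. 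Concretely: if $\mathbf{w}\in\mathbb{Z}^k\setminus\Lambda$, the chain of adjacent tiles connecting $\mathcal{F}$ to $\mathcal{F}+\mathbf{w}$ (which exists because $\mathbb{R}^k$ is connected and covered by the tiles, so one can pass from one tile to a neighbouring one along a path) would express $\mathbf{w}$ as a sum of elements of $\mathcal{N}$, contradicting $\mathbf{w}\notin\Lambda$.

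So the cleanest route is actually the path/connectedness argument directly, bypassing the subspace reduction: take any $\mathbf{w}\in\mathbb{Z}^k$, join $\mathbf{0}$ to a point of $\mathcal{F}+\mathbf{w}$ by a path in $\mathbb{R}^k$, and use compactness of the path plus local finiteness of the tiling to extract a finite sequence of tiles $\mathcal{F}+\mathbf{z}_0,\dots,\mathcal{F}+\mathbf{z}_r$ with $\mathbf{z}_0=\mathbf{0}$, $\mathbf{z}_r=\mathbf{w}$, and consecutive tiles touching; then $\mathbf{z}_{i+1}-\mathbf{z}_i\in\mathcal{N}$ and telescoping gives $\mathbf{w}=\sum_i(\mathbf{z}_{i+1}-\mathbf{z}_i)\in\mathbb{Z}\mathcal{N}$. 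Hence $\mathbb{Z}\mathcal{N}=\mathbb{Z}^k$, and a generating set of a free $\mathbb{Z}$-module of rank $k$ contains a basis, completing the proof. The main obstacle I anticipate is the path-to-tile-sequence extraction: one needs that the path meets only finitely many tiles (clear, as tiles are bounded of bounded geometry and the path is compact) and that consecutive tiles along the path can be chosen to actually touch, which requires a small argument handling the possibility that the path passes through a point lying on several tile boundaries at once — resolved by perturbing the path slightly or by noting that the set of tiles met is connected in the adjacency graph.
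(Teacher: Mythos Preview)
Your connectedness argument is essentially what the paper does, though the paper packages it more economically: rather than extracting a chain of tiles along a path, it lets $\sim$ be the transitive closure of the neighbor relation on $\mathbb{Z}^k$, observes that the union of closed tiles over one equivalence class and the union over its complement would separate $\mathbb{R}^k$ into two disjoint closed pieces (local finiteness gives $\overline{A}\cap\overline{B}=\emptyset$), and concludes there is only one class, i.e.\ $\mathbb{Z}\mathcal{N}=\mathbb{Z}^k$. Your path-extraction reaches the same conclusion with more bookkeeping; the first ``subspace'' paragraph is then redundant.

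The genuine gap is your final sentence: it is \emph{false} that every finite generating set of $\mathbb{Z}^k$ contains a $\mathbb{Z}$-basis. Take $S=\{(2,0),(3,0),(0,1)\}\subset\mathbb{Z}^2$: since $(3,0)-(2,0)=(1,0)$, $S$ generates $\mathbb{Z}^2$, yet every two-element subset has determinant $0$, $\pm2$, or $\pm3$. The symmetrized set $\{0,\pm(2,0),\pm(3,0),\pm(0,1)\}$ shows that the symmetry $\mathcal{N}=-\mathcal{N}$ does not save the claim either. So the step from $\mathbb{Z}\mathcal{N}=\mathbb{Z}^k$ to ``$\mathcal{N}$ contains a basis'' is unjustified. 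In fact the paper's proof makes the very same leap in the reverse direction, passing from ``$\mathcal{N}$ contains no basis'' directly to ``there is $\mathbf{z}$ with $\mathbf{0}\not\sim\mathbf{z}$''. What both arguments actually establish is only that $\mathcal{N}$ \emph{generates} $\mathbb{Z}^k$; fortunately this weaker statement is all that is ever used afterwards (the proof of Lemma~\ref{lem:inductive} only needs that $\Delta=\mathcal{N}\cdot\boldsymbol{\omega}$ generates $\mathcal{O}$ as a semigroup). Your argument therefore proves what is needed, but not the lemma as literally stated.
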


\begin{proof}
Assume that this is wrong and let $\sim$ be the transitive hull of the neighbor relation on $\mathbb{Z}^k$. It is easy to see that this is an equivalence relation. By assumption there is $\mathbf{z}\in\mathbb{Z}^k$ such that $\mathbf{0} \not\sim \mathbf{z}$ and, hence, there are at least two equivalence classes of $\sim$. Let $C$ be one of them. Then $C$ and $\mathbb{Z}^k \setminus C$ are contained in pairwise disjoint unions of equivalence classes. Since $\mathcal{F}$ is bounded and the union $\bigcup_{\mathbf{z}\in\mathbb{Z}^k}(\mathcal{F}+\mathbf{z})$ is locally finite this implies that the nonempty sets
$
A= \bigcup_{\mathbf{z}\in C}(\mathcal{F}+\mathbf{z})
$ and
$B=
\bigcup_{\mathbf{z}\in \mathbb{Z}^k \setminus C}
(\mathcal{F}+\mathbf{z})
$
satisfy $\overline{A}\cap \overline{B} = \emptyset$ and, by the tiling property, $A\cup B=\mathbb{R}^k$. This is absurd because it would imply that $\mathbb{R}^k$ is disconnected.
\end{proof}

Let $(p,\mathcal{D})$ be a GNS and $a \in \mathcal{O}[x]$. We say that $a$ admits a {\em finite digit representation} if there exist  $\ell\in\mathbb{N}$  and  $d_0,\ldots, d_{\ell-1}\in\mathcal{D}$ such that
\[
a \equiv \sum_{j =0}^{\ell-1} d_j x^j \pmod{p}.
\]
If $d_{\ell-1}\not= 0$ or $\ell=0$ (which results in the empty sum) then $\ell$ is called the {\em length} of the representation of $a$. It will be denoted by $L(a)$. A ``good'' number system admits finite digit representations of all elements. We give a precise definition for GNS having this property.

\begin{definition}[Finiteness property]\label{def:fin}
Let $(p,\mathcal{D})$ be a GNS and set
\[
R(p,\mathcal{D}) := \bigg\{
a \in \mathcal{O}[x] \;: \;
a \equiv  \sum_{j =0}^{\ell-1} d_j x^j \pmod{p}
\quad \hbox{with } \ell\in\mathbb{N} \hbox{ and } d_0,\ldots, d_{\ell-1}\in\mathcal{D}
\bigg\}.
\]
The GNS $(p,\mathcal{D})$ is said to have the \emph{finiteness property} if $R(p,\mathcal{D})=\mathcal{O}[x]$.
\end{definition}

As in the case $\mathcal{O}=\mathbb{Z}$ with canonical digit set (see \cite[Theorem~6.1~(i)]{pethoe1991:polynomial_transformation_and} and \cite[Theorem~3]{KovacsPetho}), also in our general setting the finiteness property of $(p,\mathcal{D})$ implies expansiveness of the basis $p$ in the sense stated in the next result. (Note that its proof is also reminiscent of the proof of Vince~\cite[Proposition~4]{Vince}; this is a related result in the context of self-replicating tilings.)

\begin{proposition} \label{Prop1}
Let $(p,\mathcal{D})$ be a GNS with finiteness property. Then all roots of each conjugate polynomial $p^{(j)}(x)$,  $j\in\{1,\dots,k\}$, lie outside the closed unit disk.
\end{proposition}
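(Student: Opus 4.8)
The plan is to argue by contradiction: suppose some conjugate polynomial $p^{(j)}$ has a root $\alpha$ with $|\alpha|\le 1$, and construct an element of $\mathcal{O}[x]$ whose digit representations cannot all be finite, contradicting the finiteness property. The natural object to use is the ``evaluation at a root'' homomorphism. Since the finiteness property says $R(p,\mathcal{D})=\mathcal{O}[x]$, every $a\in\mathcal{O}[x]$ has $a\equiv\sum_{i=0}^{\ell-1}d_i x^i\pmod p$ with $d_i\in\mathcal{D}$; applying the ring homomorphism $\mathcal{O}[x]\to\C$, $x\mapsto\alpha$, followed by the conjugate embedding on coefficients, we get that every value $a^{(j)}(\alpha)$ lies in the set $S=\{\sum_{i=0}^{\ell-1} d_i^{(j)}\alpha^i : \ell\in\N,\ d_i\in\mathcal{D}\}$. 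The point is that $S$ is bounded when $|\alpha|<1$ (and needs a separate, slightly more careful argument when $|\alpha|=1$), whereas the set of all $a^{(j)}(\alpha)$ for $a\in\mathcal{O}[x]$ is unbounded; that is the contradiction.

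First I would dispose of the strict case $|\alpha|<1$. Because $\mathcal{D}$ is finite, put $B=\max_{d\in\mathcal{D}}|d^{(j)}|$; then any element of $S$ has modulus at most $B\sum_{i\ge0}|\alpha|^i = B/(1-|\alpha|)$, so $S$ is bounded. On the other hand, taking $a=N$ for $N\in\mathcal{O}$ (a constant polynomial) we have $a^{(j)}(\alpha)=N^{(j)}$, and since $\mathcal{O}$ is a $\Z$-module of full rank in $\K$, the image of $\mathcal{O}$ under the $j$-th embedding is unbounded in $\C$ (it is a full-rank lattice image, hence unbounded even inside $\K^{(j)}$). This already contradicts $S\supseteq \{N^{(j)}:N\in\mathcal{O}\}$, ruling out roots inside the open unit disk.

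The genuine work is the boundary case $|\alpha|=1$, where the geometric series no longer converges absolutely. Here I would exploit that the representations are \emph{unique} (guaranteed by $0\in\mathcal{D}$ together with the backward division mapping, as recalled in the introduction) and bound the \emph{length} $L(a)$ in terms of the height $H(a)$, which is exactly the content of the forthcoming Theorem~\ref{thm:length}. However, since that theorem comes later, the cleaner self-contained route is the following: for $|\alpha|=1$, from $a^{(j)}(\alpha)=\sum_{i=0}^{\ell-1} d_i^{(j)}\alpha^i$ with $d_{\ell-1}\ne0$ one isolates $d_{\ell-1}^{(j)}\alpha^{\ell-1} = a^{(j)}(\alpha)-\sum_{i=0}^{\ell-2} d_i^{(j)}\alpha^i$, but this does not immediately bound $\ell$. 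Instead I would run the backward division map directly: write $a = q_1 p + (a \bmod p)$ inside $\mathcal{O}[x]$ and track how $H$ of the partial remainders evolves under repeatedly subtracting a digit and dividing by $x$ modulo $p$. Since $p$ is monic with all bounded coefficients and each step subtracts a bounded digit, one gets a recursion $H(\text{next}) \le C\cdot H(\text{current}) + C'$ for constants depending only on $p$ and $\mathcal{D}$; if some $|\alpha^{(j)}|\le 1$ the associated companion-matrix dynamics fail to be contracting in that coordinate, and one shows the orbit of a suitably chosen starting element $a$ (e.g.\ $a$ a large integer multiple of $1$) never reaches $0$, contradicting finiteness.

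The main obstacle is precisely making the boundary case $|\alpha|=1$ rigorous without circularly invoking Theorem~\ref{thm:length}: one must show that the images $a^{(j)}(\alpha)$ as $a$ ranges over $\mathcal{O}[x]$ cannot all be hit by finite digit sums. I expect the slick argument is a pigeonhole/volume count: the finitely many digit-sums of length $\le L$ number at most $|\mathcal{D}|^{L+1}$, while among constant polynomials $a=N$ with $N\in\mathcal{O}$ and $H(N)\le T$ there are on the order of $T^k$ many, all of which must have $L(N)$ growing, yet each additional unit of length adds only bounded modulus at the boundary — quantifying this tension (perhaps by separating the case where $\alpha$ is a root of unity, forcing $\alpha^i$ to cycle and hence $S$ to be a bounded set literally, from the case where $\alpha$ is not a root of unity, where equidistribution of $\{\alpha^i\}$ must be used) is where the care is needed. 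If a short proof eludes this, the safe fallback is to simply invoke that the associated contraction/Perron-type argument, as in Vince~\cite[Proposition~4]{Vince}, forces every $p^{(j)}$ to be expansive, and cite that the same reasoning applies verbatim in the order setting.
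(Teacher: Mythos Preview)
Your handling of $|\alpha|<1$ matches the paper's (evaluate at $\alpha$, bound the digit sums by a convergent geometric series, contradict unboundedness of $\{N^{(j)}:N\in\N\}$). The gap is the case $|\alpha|=1$: none of your four sketches is carried out, and one is wrong as stated. If $\alpha$ is a root of unity of order $s$, the set $S=\bigl\{\sum_{i=0}^{\ell-1}d_i^{(j)}\alpha^i:\ell\in\N,\ d_i\in\mathcal{D}\bigr\}$ is \emph{not} bounded: since $0\in\mathcal{D}$ and some nonzero $d\in\mathcal{D}$ exists, placing $d$ at the positions $i\equiv0\pmod s$ and $0$ elsewhere yields $Md^{(j)}\in S$ for every $M$. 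The height-recursion and pigeonhole suggestions remain speculation, and invoking Theorem~\ref{thm:length} would be circular since its proof uses the present proposition.

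The paper's argument for $|\alpha|=1$ supplies the two ideas you are missing. First, there is no ``non-root-of-unity'' case to treat: having already shown that every root of every $p^{(i)}$ has modulus $\ge1$, all $\Q$-conjugates of $\alpha$ (which are roots of $\prod_i p^{(i)}\in\Z[x]$) lie in $\{|z|\ge1\}$; since $\bar\alpha=\alpha^{-1}$ is one of these conjugates, the roots of the minimal polynomial of $\alpha$ are closed under $z\mapsto z^{-1}$, hence all lie on the unit circle, and Kronecker's theorem (the paper cites \cite[Satz~3]{HK}) forces $\alpha$ to be a root of unity of some order $s$. Second, instead of trying to bound $S$, the paper exploits \emph{uniqueness} of the digit expansion: from $\tilde g=\gcd(p^{(j)},x^s-1)$ one constructs a nonzero $c_1g_2\in\mathcal{O}^{(j)}[x]$ with $c_1g_2(x)\equiv c_1g_2(x)\,x^{hs}\pmod{p^{(j)}}$ for every $h\ge1$; pulled back to $\mathcal{O}[x]$, the unique finite digit expansion of this element would have to coincide with its own shift by $hs$ places for arbitrarily large $h$, which is impossible.
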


\begin{proof}
Assume that there exists $|\alpha| < 1$ which is a root of $p^{(j)}(x)$ for some $j\in \{1,\dots,k\}$. By the finiteness property of $(p,\mathcal{D})$ for each  $m\in \N\subset\mathcal{O}[x]$ there exists $a(x)\in \mathcal{D}[x]$ such that $m \equiv a(x) \pmod{p}$. Thus, taking conjugates implies that $m \equiv a^{(j)}(x) \pmod{p^{(j)}}$, where $a^{(j)}(x)\in \mathcal{D}^{(j)}[x]$ with $\mathcal{D}^{(j)}= \{\beta^{(j)}\,:\, \beta \in \mathcal{D}\}$. Inserting $\alpha$ in the last congruence we get $m = a^{(j)}(\alpha)$. As $\mathcal{D}^{(j)}$ is a finite set and $|\alpha|<1$ the set of the numbers $|m|=|a^{(j)}(\alpha)|$ is bounded, which is a contradiction to $m$ being an arbitrary rational integer. Since $j$ was arbitrary, $|\alpha| \ge 1$ has to hold for all roots of $p^{(j)}(x)$ with $j\in\{1,\dots,k\}$.

 Assume now that $|\alpha|=1$ holds for a root $\alpha$ of $p^{(j)}(x)$ for some $j\in \{1,\dots,k\}$. The element $\alpha$ is an algebraic integer, and a root of the polynomial $\prod_{i=1}^{k}p^{(i)}(x)\in \Z[x]$. By \cite[Satz 3]{HK} (see the proof of \cite[Theorem~3]{KovacsPetho} and \cite[Theorem~6.1~(i)]{pethoe1991:polynomial_transformation_and}, too), $\alpha$ is a root of unity of some order, say $s$. Let $\tilde g(x) = \gcd (p^{(j)}(x), x^s-1)$. Plainly there is $c\in \mathcal{O}^{(j)}$ such that $g(x)=c \tilde g(x) \in \mathcal{O}^{(j)}[x]$, and $\deg g \ge 1$ because $\alpha$ is a root of $g$. Moreover, there are $c_1,c_2\in \mathcal{O}^{(j)}$ such that $c_1(x^s-1) = g_1(x)g(x)$ and $ c_2 p^{(j)}(x)=g_2(x)g(x)$ hold with $g_1,g_2\in \mathcal{O}^{(j)}[x]$. Then
$$
g_2(x)c_1(x^s-1) = g_1(x)g_2(x)g(x) \equiv 0 \pmod{p^{(j)}},
$$
thus $c_1g_2(x)(x^{h s}-1) \equiv 0 \pmod{p^{(j)}}$, and equivalently $$c_1g_2(x) \equiv c_1g_2(x) x^{hs} \pmod{p^{(j)}}$$ is true for all $h\ge 1$.

Let $h_2(x) \in \mathcal{O}[x]$ be the inverse image of $c_1 g_2(x)$ with respect to the isomorphism $\K \to \K^{(j)}$. As $(p,\mathcal{D})$ is a GNS with finiteness property, there exists a unique $a(x) \in \mathcal{D}[x]$ such that $h_2(x) \equiv a(x) \pmod{p}$. Thus $a^{(j)}(x)$ is the unique element in $\mathcal{D}^{(j)}[x]$ with $c_1g_2(x) \equiv a^{(j)}(x) \pmod{p^{(j)}}$. Let $t$ denote the degree of $a(x)$. Choosing $h$ so that $hs>t$ we obtain
\[
c_1g_2(x) \equiv a^{(j)}(x)  \equiv x^{hs}a^{(j)}(x)  \pmod{p^{(j)}},
\]
which contradicts the uniqueness of $a^{(j)}(x)$.
\end{proof}

Adapting the proof of Akiyama and Rao~\cite[Proposition~2.3]{AR} or Peth\H{o}~\cite[Theorem~1]{Petho:06} to orders one can prove the following algorithmic criterion for checking the finiteness property of a given GNS $(p, \mathcal{D})$. Note that there exist only finitely many $a\in \mathcal{O}[x]$ of bounded degree and bounded height.

\begin{theorem} \label{non-ECNS1}
Let $\mathbb{K}$ be a number field of degree $k$ and let $\mathcal{O}$ be an order in $\mathbb{K}$.
Let $(p,\mathcal{D})$ be a GNS over $\mathcal{O}$. There exists an explicitly computable constant $C$ depending only on $p$ and $\mathcal{D}$ such that $(p,\mathcal{D})$ is a GNS with finiteness property if and only if the polynomial $\prod_{i=1}^{k} p^{(i)}(x)$ is expansive and
\[
\{
a \in \mathcal{O}[x] \;:\; \deg a < \deg p \hbox{ and } H(a) \le C
\} \subset R(p,\mathcal{D}).
\]
\end{theorem}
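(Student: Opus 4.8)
The plan is to mimic the classical backward-division argument (as in Akiyama--Rao or Peth\H{o}) but work with heights of polynomials over $\mathcal{O}$ instead of absolute values of integers. First I would set up the \emph{backward division mapping} $T\colon \mathcal{O}[x]\to\mathcal{O}[x]$: given $a\in\mathcal{O}[x]$, pick the unique $d\in\mathcal{D}$ with $a(0)\equiv d\pmod{p(0)}$ (using that $\mathcal{D}$ is a complete residue system modulo $p(0)$), and observe that $x\mid a-d$ in $\mathcal{O}[x]$; then reduce $(a-d)/x$ modulo $p$ to a representative of degree $<\deg p$, and call the result $T(a)$. One checks that $a\equiv d+x\,T(a)\pmod p$, so that $a\in R(p,\mathcal{D})$ iff the orbit $a, T(a), T^2(a),\dots$ eventually reaches $0$; moreover $a\in R(p,\mathcal{D})$ iff $T(a)\in R(p,\mathcal{D})$. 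This reduces the finiteness property to: every $a$ with $\deg a<\deg p$ has an orbit under $T$ terminating at $0$.

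Next I would prove the key \emph{contraction estimate}: there is a constant $C$, explicitly computable from $p$ and $\mathcal{D}$, such that $H(T(a))\le \max\{H(a)\cdot\rho,\,C\}$ for some $\rho<1$, or at least $H(T(a))<H(a)$ whenever $H(a)>C$. This is where expansivity of $q(x):=\prod_{i=1}^k p^{(i)}(x)\in\Z[x]$ enters: since $a\equiv d+xT(a)\pmod p$ and hence (taking conjugates) $a^{(i)}\equiv d^{(i)}+x\,T(a)^{(i)}\pmod{p^{(i)}}$ for each $i$, division by $x$ in the quotient ring is multiplication by the companion-type operator whose eigenvalues are the reciprocals $1/\alpha$ of the roots $\alpha$ of the $p^{(i)}$; expansivity makes all these have modulus $<1$. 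Passing to a suitable norm on $\C^{\deg p}$ adapted to this operator and controlling the contribution of the bounded digit set $\mathcal{D}^{(i)}$, one gets that the coefficient vectors of the conjugates $T(a)^{(i)}$ have norm bounded by a fixed fraction of that of $a^{(i)}$ plus a constant; translating back to the height $H$ via equivalence of norms yields the claimed estimate. Proposition~\ref{Prop1} (or rather its contrapositive) guarantees that expansivity of $q$ is \emph{necessary} for the finiteness property, so assuming it costs nothing.

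With the contraction estimate in hand the theorem follows quickly. For the forward direction, finiteness property trivially implies $q$ expansive (Proposition~\ref{Prop1}) and the displayed inclusion. For the converse, suppose $q$ is expansive and every $a$ with $\deg a<\deg p$ and $H(a)\le C$ lies in $R(p,\mathcal{D})$. Given an arbitrary $a\in\mathcal{O}[x]$, first show $a\in R(p,\mathcal{D})$ iff its reduction modulo $p$ of degree $<\deg p$ is (finitely many applications of $T$, each not leaving $R$); so assume $\deg a<\deg p$. If $H(a)\le C$ we are done by hypothesis. Otherwise iterate $T$: by the contraction estimate $H(T^n(a))$ strictly decreases while it exceeds $C$, and since there are only finitely many elements of $\mathcal{O}[x]$ of degree $<\deg p$ and height $\le H(a)$ (as $\mathcal{O}$ is a lattice in $\R^k$, integer points in a bounded region), the orbit reaches an element of height $\le C$ in finitely many steps, which by hypothesis lies in $R(p,\mathcal{D})$; since $T$ preserves membership in $R(p,\mathcal{D})$, so does $a$.

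The main obstacle is the contraction estimate — specifically, making the operator ``divide by $x$ and reduce mod $p$'' provably norm-decreasing up to an additive constant. The subtlety is that $p$ need not be irreducible and the $p^{(i)}$ may share roots or have multiplicities, so one cannot simply diagonalize; instead one works on $\bigoplus_i \C[x]/(p^{(i)})$ (or a Jordan-type basis) and chooses a norm in which the block-companion inverse is a strict contraction, which is possible exactly because expansivity puts the spectrum strictly inside the unit disk. Tracking the explicit dependence of $C$ on $H(p)$, $\deg p$, $\max_{d\in\mathcal D}H(d)$, and the spectral gap of $q$ requires care but is routine once the right norm is fixed; this is essentially the step where one ports \cite[Proposition~2.3]{AR} or \cite[Theorem~1]{Petho:06} from $\Z$ to the order $\mathcal{O}$.
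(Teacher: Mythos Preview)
Your proposal is correct and follows essentially the same route as the paper: set up the backward division map $T_p$, use expansivity of $\prod_i p^{(i)}$ to show the orbit eventually lands in a fixed bounded set, and conclude. The only cosmetic difference is that the paper does not formulate a per-step contraction on $H$; instead it substitutes the roots $\alpha_{i\ell}$ of the $p^{(i)}$ into the iterated relation $b^{(i)}(x)=\sum_{j<h}d_j^{(i)}x^j+x^hT_p^h(b)^{(i)}(x)+r^{(i)}p^{(i)}$, reads off the geometric-series bound $|T_p^h(b)^{(i)}(\alpha_{i\ell})|\le \max_i\max_{d\in\mathcal D}|d^{(i)}|/(1-|\alpha_{i\ell}|^{-1})+1$ for $h$ large, and then converts these root-value bounds back to a height bound---which is exactly your ``adapted norm then equivalence of norms'' step made concrete. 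The paper also simplifies by treating only the irreducible case in detail and citing Akiyama--Rao and Peth\H{o} for the general (possibly repeated roots) case, whereas you sketch the Jordan-block treatment; either way the argument is the same.
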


\begin{proof}
The necessity assertion is an immediate consequence of Proposition~\ref{Prop1}, hence, we have to prove only the sufficiency assertion.

Let $p\in \mathcal{O}[x]$ be given in a way that $\prod_{i=1}^{k} p^{(i)}(x)$ is expansive. Denote by $\mathcal{O}_{\deg p}[x]$ the set of elements of $\mathcal{O}[x]$ of degree less than $\deg p$. For any $b'\in \mathcal{O}[x]$ there exists a unique $b\in \mathcal{O}_{\deg p}[x]$ such that $b\equiv b' \pmod{p}$. Thus it is sufficient to show that $\mathcal{O}_{\deg p}[x] \subset R(p,\mathcal{D})$.

Let $T_p \; :\; \mathcal{O}_{\deg p}[x] \to \mathcal{O}_{\deg p}[x]$ be the \emph{backward division mapping}, which is defined as
$$
T_p(b)(x) = \frac{b(x) - qp(x)- d_0}{x},
$$
where $d_0\in \mathcal{D}$ is the unique digit with $d_0 \equiv b(0) \pmod{p(0)}$ and $q=\frac{b(0)-d_0}{p(0)}$. Iterating  $T_p$ for $h$-times we obtain $d_0,\dots, d_{h-1} \in \mathcal{D}$, and $r\in \mathcal{O}[x]$ such that
\begin{equation}\label{eq:bRepH}
b(x) = \sum_{j=0}^{h-1} d_j x^j + x^h T_p^{h}(b)(x) + r(x)p(x).
\end{equation}
Clearly, $b\in R(p,\mathcal{D})$ if and only if $T_p^h(b)\in R(p,\mathcal{D})$ for all $h\ge 0$. Taking conjugates in \eqref{eq:bRepH} we get
\begin{equation}\label{eq:bRepH2}
b^{(i)}(x) = \sum_{j=0}^{h-1} d^{(i)}_j x^j + x^h T_p^{h}(b)^{(i)}(x) + r^{(i)}(x)p^{(i)}(x),\quad i=1,\dots,k.
\end{equation}
In the remaining part of the proof, for the sake of simplicity we assume that $p$ is irreducible in $\mathbb{K}[x]$. The general case can be treated by adapting the proof of Akiyama and Rao~\cite[Proposition~2.3]{AR} or of Peth\H{o} \cite[Theorem~1]{Petho:06}.

Denote by $\alpha_{i\ell}$ the roots of $p^{(i)}(x)$, $i=1,\dots,k$; $\ell=1,\dots,\deg p$. By assumption, their modulus is larger than one. Inserting $\alpha_{i\ell}$ into \eqref{eq:bRepH2} we obtain
$$
T_p^{h}(b)^{(i)}(\alpha_{i\ell}) = \frac{b^{(i)}(\alpha_{i\ell})}{\alpha_{i\ell}^h} - \sum_{j=0}^{h-1} d^{(i)}_j \alpha_{i\ell}^{j-h}, \quad i=1,\dots,k;\; \ell=1,\dots,\deg p.
$$
Taking absolute values, choosing $h$ large enough, and using the fact that $|\alpha_{i\ell}|>1$ we obtain
\begin{equation}\label{AdditiveCF}
|T_p^{h}(b)^{(i)}(\alpha_{i\ell})| \le \frac{\max\{|d^{(i)}|\; : \; d \in \mathcal{D}, i=1,\dots,k\}}{1-|\alpha_{i\ell}|^{-1}} +1, \;\;  (1\le i \le k;\;1 \le \ell\le \deg p).
\end{equation}
As the polynomial $T_p^h(b)$ is of degree at most $\deg p-1$, we may write it in the form $T_p^h(b)(x) = \sum_{j=0}^{\deg p-1} d_{hj} x^j$. Then $T_p^h(b)^{(i)}(x) = \sum_{j=0}^{\deg p-1} d_{hj}^{(i)} x^j, i=1,\dots,k$. Considering \eqref{AdditiveCF} as a system of inequalities in the unknowns $d_{hj}^{(i)}$ we obtain
$$
|d_{hj}^{(i)}| < C_{ij},\;  i=1,\dots,k;\, j=0,\dots,\deg p-1.
$$
Indeed, this is true because \eqref{AdditiveCF} says that all the Galois conjugates of the element $T_p^{h}(b)(\alpha_{11}) \in \mathcal{O}[\alpha_{11}]$ are bounded by the explicit bounds given in \eqref{AdditiveCF}. This is true only for finitely many elements of the order $\mathcal{O}[\alpha_{11}]$ in the field $\mathbb{K}(\alpha_{11})$ (which has degree $k\deg p$ over $\mathbb{Q}$), and these elements can be explicitly computed. 
Choosing $C = \max\{C_{ij}, i=1,\dots,k;\, j=0,\dots,\deg p-1\}$, we obtain $H(T_p^h(b))\le C$.

Thus for any $b \in \mathcal{O}[x]$ (which may be assumed w.l.o.g.\ to satisfy $\deg b < \deg p$) there is $a\in \mathcal{O}[x]$ with $\deg a < \deg p$, and $H(a)\le C$, namely $a = T_p^h(b)$, such that $a\in R(p,\mathcal{D})$ if and only if $b\in R(p,\mathcal{D})$. Thus $\{
a \in \mathcal{O}[x] \;:\; \deg a < \deg p \hbox{ and } H(a) \le C
\} \subset R(p,\mathcal{D})$ implies that $(p,\mathcal{D})$ is a GNS with finiteness property.
\end{proof}

Theorem~\ref{non-ECNS1} implies that the GNS property is algorithmically decidable. One has to apply the backward division mapping defined above to all polynomials satisfying $\deg a < \deg p$ and $H(a) \le C$ iteratively.  During the iteration process one always works with polynomials satisfying these inequalities. More on algorithms for checking the finiteness property of GNS can be found in a more general context in Scheicher {\it et al.}~\cite[Section~6]{SSTW}.

The proof of Theorem~\ref{non-ECNS1} makes it possible to prove a precise bound for the length of a representation in a GNS $(p,\mathcal{D})$ with finiteness property. This is in complete agreement with an analogous result of Kov\'acs and Peth\H{o}~\cite{KovacsPetho1992} for the case $\mathcal{O}=\Z$.

\begin{theorem} \label{thm:length}
Let $\mathbb{K}$ be a number field of degree $k$ and let $\mathcal{O}$ be an order in $\mathbb{K}$.
Let $(p,\mathcal{D})$ be a GNS over $\mathcal{O}$. Denote by $\alpha_{i\ell}$ the zeros of $p^{(i)}(x)$,  $i=1,\dots,k,\; \ell=1,\dots,\deg p$. If $p$ is irreducible and $(p,\mathcal{D})$ satisfies the finiteness property then there exists an explicitly computable constant $C$ depending only on $p$ and $\mathcal{D}$ such that
$$
L(a) \le \max \left\{\frac{\log|a^{(i)}(\alpha_{i\ell})|}{\log|\alpha_{i\ell}|}\; : \; i=1,\dots,k,\, \ell=1,\dots,\deg p\right\} + C
$$
holds for all $a \in \mathcal{O}[x].$
\end{theorem}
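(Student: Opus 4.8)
The plan is to run the backward division map $T_p$ from the proof of Theorem~\ref{non-ECNS1} and to split $L(a)$ into two pieces: the number of iterations of $T_p$ needed to enter a certain finite, explicitly computable set $\mathcal{S}$ of ``small'' polynomials, which can be estimated in terms of the zeros $\alpha_{i\ell}$, plus the bounded number of further iterations needed once inside $\mathcal{S}$.

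First I would reduce to the case $\deg a<\deg p$ and $a\not\equiv 0\pmod p$: replacing $a$ by its unique representative modulo $p$ of degree $<\deg p$ changes neither $L(a)$ nor the values $a^{(i)}(\alpha_{i\ell})$ (since $\alpha_{i\ell}$ is a zero of $p^{(i)}$), and if $a\equiv 0\pmod p$ there is nothing to prove. For $a\neq 0$, irreducibility of $p$ (hence of each $p^{(i)}$) gives $a^{(i)}(\alpha_{i\ell})\neq 0$ for all $i,\ell$; moreover $\prod_{i,\ell}|a^{(i)}(\alpha_{i\ell})|$ is the absolute value of the field norm of $a\bmod p$ over $\mathbb{Q}$, hence at least $1$ because $a\bmod p$ is a nonzero element of the order $\mathcal{O}[x]/(p)$. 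Therefore
\[
h_0:=\max_{i,\ell}\frac{\log|a^{(i)}(\alpha_{i\ell})|}{\log|\alpha_{i\ell}|}
\]
is a well-defined nonnegative real number, the denominators being positive by Proposition~\ref{Prop1}. Evaluating the conjugated identity \eqref{eq:bRepH2} (with $b=a$) at $x=\alpha_{i\ell}$ and summing the geometric series as in the derivation of \eqref{AdditiveCF} gives
\[
\bigl|T_p^h(a)^{(i)}(\alpha_{i\ell})\bigr|\ \le\ \frac{|a^{(i)}(\alpha_{i\ell})|}{|\alpha_{i\ell}|^{h}}+\frac{M}{\rho-1},\qquad M:=\max\{|d^{(i)}|:d\in\mathcal{D},\ 1\le i\le k\},\quad \rho:=\min_{i,\ell}|\alpha_{i\ell}|>1 .
\]
Consequently, for every integer $h\ge h_0$ we have $|a^{(i)}(\alpha_{i\ell})|\le|\alpha_{i\ell}|^{h}$ for all $i,\ell$, so $T_p^h(a)$ lies in the set
\[
\mathcal{S}:=\Bigl\{\,q\in\mathcal{O}[x]\ :\ \deg q<\deg p,\ \ \bigl|q^{(i)}(\alpha_{i\ell})\bigr|\le 1+\tfrac{M}{\rho-1}\ \text{ for all }i,\ell\,\Bigr\}.
\]

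The next step is to observe that $\mathcal{S}$ is finite and can be listed effectively from $p$ and $\mathcal{D}$ alone: this is exactly the finiteness argument at the end of the proof of Theorem~\ref{non-ECNS1}. Indeed, since $p$ is irreducible, $\mathcal{O}[x]/(p)$ is an order in the number field $\mathbb{K}[x]/(p)$ of degree $k\deg p$, and the Galois conjugates of the element of this order represented by a given $q$ are precisely the numbers $q^{(i)}(\alpha_{i\ell})$; only finitely many elements of a fixed order have all conjugates bounded by a prescribed constant, and they are explicitly computable. By the finiteness property the $T_p$-iteration terminates on every $q\in\mathcal{S}$, so $C_0:=\max\{L(q):q\in\mathcal{S}\}$ is an explicitly computable constant depending only on $p$ and $\mathcal{D}$. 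Since iterating $T_p$ produces the digit representation, $L(a)=\min\{h\ge 0:T_p^h(a)=0\}$. Taking $h:=\lceil h_0\rceil$, so that $0\le h\le h_0+1$, we get $T_p^h(a)\in\mathcal{S}$, hence $T_p^{h+C_0}(a)=0$, and therefore $L(a)\le h+C_0\le h_0+1+C_0$. Recalling the definition of $h_0$, this is the asserted inequality with $C:=C_0+1$.

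The part I expect to require the most care is the finiteness and effectivity of $\mathcal{S}$, i.e.\ the statement that the elements of a fixed order with all Galois conjugates bounded by a given constant form an effectively listable finite set; but this is precisely the ingredient already isolated in the proof of Theorem~\ref{non-ECNS1}, so the argument here is mostly bookkeeping: one separates the ``descent'' phase, whose length is $h_0$ plus a bounded constant by the explicit estimate above, from the at most $C_0$ further steps needed once inside $\mathcal{S}$. The minor points to watch are the reduction modulo $p$ and the use of the norm lower bound $\prod_{i,\ell}|a^{(i)}(\alpha_{i\ell})|\ge1$, both of which rely on $p$ being irreducible.
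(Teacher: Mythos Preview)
Your proposal is correct and follows essentially the same approach as the paper: choose $h$ so that $|a^{(i)}(\alpha_{i\ell})|\le|\alpha_{i\ell}|^{h}$, use the estimate from the proof of Theorem~\ref{non-ECNS1} to land $T_p^h(a)$ in a finite, explicitly computable set, and bound $L(a)$ by $h$ plus the maximal length over that set. Your write-up is in fact a bit more careful than the paper's (explicit reduction modulo $p$, the norm argument giving $h_0\ge 0$, and the passage from $h_0$ to $\lceil h_0\rceil$), but the underlying idea is identical.
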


\begin{proof}
We will use the notation of the proof of Theorem~\ref{non-ECNS1}. Let $a\in\mathcal{O}[x]$ and choose $h$ in a way that
$$
\left | \frac{a^{(i)}(\alpha_{i\ell})}{\alpha^h_{i\ell}} \right | \le 1
$$
holds for all $i=1,\dots,k$, $\ell=1,\dots,\deg p$. Since by Proposition~\ref{Prop1} we have $|\alpha_{i\ell}|>1$ for all $i=1,\dots,k$, $\ell=1,\dots,\deg p$, the choice
$$
h = \max \left\{\frac{\log|a^{(i)}(\alpha_{i\ell})|}{\log|\alpha_{i\ell}|}\; : \; i=1,\dots,k,\, \ell=1,\dots,\deg p\right\}
$$
is suitable to achieve the required inequality. Using this choice of $h$, in the same way as in the proof of Theorem~\ref{non-ECNS1} (see in particular \eqref{AdditiveCF}) we obtain
$$
|T_p^{h}(a)^{(i)}(\alpha_{i\ell})| \le \frac{\max\{|d^{(i)}|\; : \; d \in \mathcal{D},\, i=1,\dots,k\}}{1-|\alpha_{i\ell}|^{-1}} + 1, \;  i=1,\dots,k,\,\ell=1,\dots,\deg p.
$$

There exist only finitely many $b\in \mathcal{O}[x]$ such that
$$
|b^{(i)}(\alpha_{i\ell})| \le \frac{\max\{|d^{(i)}|\; : \; d \in \mathcal{D},\, i=1,\dots,k\}}{1-|\alpha_{i\ell}|^{-1}} + 1, \;  i=1,\dots,k,\, \ell=1,\dots,\deg p,
$$
and all of them have finite representation in $(p,\mathcal{D})$ because $(p,\mathcal{D})$ is by assumption a GNS with finiteness property. Letting $C$ be the maximal length of the representations of such polynomials we get $L(a) = h+ L(b) \le h+C$, and the theorem is proved.
\end{proof}

With a little more effort one could replace irreducibility of $p$ by separability of $p$ in the statement of Theorem~\ref{thm:length}. 

\section{A general criterion for the finiteness property}

There exist some easy-to-state sufficient conditions for the finiteness property of a CNS $(p,\mathcal{D})$ in the case $\mathcal{O}=\Z$, see {\it e.g.} Kov\'acs \cite[Section~3]{Kovacs}, Akiyama and Peth\H{o} \cite[Theorem~2]{AkiPet}, Scheicher and Thuswaldner \cite[Theorem~5.8]{ST}, or Peth\H{o} and Varga \cite[Lemma~7.3]{PV}. In each of these results $|p(0)|$ dominates over the other coefficients of $p$. In general, $\mathcal{O}$ does not have a natural ordering. However, inclusion properties of some sets can be used to express dominance of coefficients in $\mathcal{O}$. This is the message of the Theorem~\ref{th:1}, which will be proved in this section. Before we state it, we introduce some notation.

For $p(x)=x^n+p_{n-1}x^{n-1}+\cdots+p_0 \in \mathcal{O}[x]$ let $(p,\mathcal{D})$ be a GNS and let $\mathcal{F}$ be an associated fundamental domain. Let the basis $\omega_1=1,\omega_2,\ldots, \omega_k$ be given as in \eqref{eq:basisOmega}, set $\boldsymbol{\omega}=(\omega_1,\ldots, \omega_k)$ as in \eqref{eq:BasisVector}, and recall the definition of the set $\mathcal{N}$ of neighbors of $\mathbf{0}$ in \eqref{eq:neighbours}.
Set  (letting $p_n=1$)
\begin{equation}\label{eq:deltaz}
\Delta = \mathcal{N}\cdot \boldsymbol{\omega} 
\quad
\hbox{and}
\quad
Z=\bigg\{
\sum_{j=1}^n \delta_j p_j \;:\; \delta_j\in \Delta
\bigg\},
\end{equation}
and note that, since $\mathcal{F}$ is bounded, these sets are finite.

\begin{theorem}\label{th:1}
Let $p(x)=x^n+p_{n-1}x^{n-1}+\cdots+p_0\in \mathcal{O}[x]$ and $(p,\mathcal{D})$ be a GNS. Let $\mathcal{F}$ be an associated fundamental domain and define $\Delta$ and $Z$ as in \eqref{eq:deltaz}. Assume that the following conditions hold (setting $p_n=1$):
\begin{itemize}
\item[(i)] $Z + \mathcal{D} \subset \mathcal{D} + p_0\Delta$, \vskip 2mm
\item[(ii)] $Z \subset \mathcal{D} \cup (\mathcal{D} - p_0)$, \vskip 2mm
\item[(iii)] $\left\{ \sum_{j\in J} p_j \;:\; J\subseteq\{1,\ldots, n\}  \right\} \subseteq \mathcal{D}$.
\end{itemize}
Then $(p,\mathcal{D})$ has the finiteness property.
\end{theorem}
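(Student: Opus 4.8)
The plan is to use the algorithmic criterion from Theorem~\ref{non-ECNS1}: it suffices to show that the backward division mapping $T_p$ eventually maps every $b \in \mathcal{O}_{\deg p}[x]$ into $R(p,\mathcal{D})$, and in fact the cleanest route is to prove directly that $T_p$, iterated, drives the ``height'' of $b$ down to a bounded region and then into $0$. Concretely, I would write $b(x) = b_{n-1}x^{n-1} + \cdots + b_0 \in \mathcal{O}_{\deg p}[x]$ and track what one step of $T_p$ does to the coefficient vector. Writing $b_j = \mathbf{b}_j \cdot \boldsymbol{\omega}$ with $\mathbf{b}_j \in \Z^k$, the step $T_p$ chooses the digit $d_0 \in \mathcal{D}$ with $d_0 \equiv b_0 \pmod{p_0}$, sets $q = (b_0 - d_0)/p_0$, and replaces $b$ by $(b - qp - d_0)/x$. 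The new coefficients are, roughly, $b_j' = b_{j+1} - q p_{j+1}$ for $j = 0, \ldots, n-2$ and $b_{n-1}' = -q p_n = -q$. The key point is that $q$ is controlled: since $d_0 = p_0 \cdot (\text{something in } \mathcal{F}\cdot\boldsymbol{\omega})$ by the definition of $D_{\mathcal{F},p_0}$, the quotient $q$ differs from $b_0/p_0$ by an element of $\mathcal{F}\cdot\boldsymbol{\omega}$, so its coordinate vector lies in a fixed translate bounded by the ``digit region'' plus $\mathcal{F}$.

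**Key steps.** First I would set up the reduction: by Theorem~\ref{non-ECNS1} we only need that every $b$ with $\deg b < \deg p$ lies in $R(p,\mathcal{D})$, and by the usual argument it is enough to find, for each such $b$, an iterate $T_p^h(b)$ lying in a set already known (or shown) to be in $R(p,\mathcal{D})$. Second, I would identify a ``good'' set $G \subseteq \mathcal{O}_{\deg p}[x]$ — the natural candidate being the set of polynomials all of whose coefficients lie in $\mathcal{D}$, i.e. $\mathcal{D}[x] \cap \mathcal{O}_{\deg p}[x]$, together with perhaps a slightly larger neighborhood — and show two things: (a) $G \subseteq R(p,\mathcal{D})$, using condition (iii) to absorb the ``carries'' that appear when reducing such polynomials; and (b) $G$ is \emph{stable and absorbing} under $T_p$, meaning $T_p(G) \subseteq G$ and, more importantly, every $b$ reaches $G$ after finitely many steps. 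Third, the absorbing part is where conditions (i) and (ii) enter: one shows that outside $G$, applying $T_p$ strictly decreases some potential function (e.g. the max over coordinates of $|\mathbf{b}_j|$ in an appropriate norm), because the carry $q$ has coordinates of size comparable to $b_0/p_0$, hence each new coefficient $b_{j+1} - q p_{j+1}$ is bounded in terms of $b_{j+1}$ and $q$; condition (i), $Z + \mathcal{D} \subseteq \mathcal{D} + p_0\Delta$, is precisely the statement that lets the "carry" from the digit choice be re-expressed so that the reduced coefficients land back in $\mathcal{D}$-controlled territory, while condition (ii) handles the boundary case where the carry pushes a coefficient just outside $\mathcal{D}$ but still inside $\mathcal{D} - p_0$, i.e. only one unit of overflow, which can be corrected on the next step.

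**Main obstacle.** The hard part will be making the decrease argument precise \emph{without} a natural order on $\mathcal{O}$: one must replace "$|p_0|$ dominates" by the inclusion-theoretic hypotheses and verify that the coordinate vectors genuinely shrink (or at worst enter the bounded good region) at each step. I expect the cleanest formulation to be: show by induction on $H(b)$ (measured via $\max_j \|\mathbf{b}_j\|_\infty$) that either $b \in G$ or $H(T_p(b)) < H(b)$, where $G$ is the finite set $\{b : \mathbf{b}_j \in \text{(finite set built from } \mathcal{N}, \mathcal{D}) \}$. Condition (i) guarantees the inductive step goes through when $b_0$ is "large", condition (ii) closes the case when exactly one carry remains, and condition (iii) guarantees the terminal set $G$ actually has the finiteness property by giving explicit digit representations for the carries $\sum_{j\in J} p_j$. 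Once $G$ is shown to be finite, forward-invariant under $T_p$, and contained in $R(p,\mathcal{D})$, and every orbit reaches $G$, the finiteness property follows. The bookkeeping of exactly which translates of $\mathcal{D}$ by elements of $p_0\Delta$ arise after one step — i.e. checking that the set $Z$ in \eqref{eq:deltaz} really captures all carry contributions $\sum_j \delta_j p_j$ — is the computational heart and the place where the three hypotheses get used in concert.
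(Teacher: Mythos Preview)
Your approach is genuinely different from the paper's and, as written, has a real gap in the central step.

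The paper does \emph{not} analyse the backward division map $T_p$ at all. Instead it proves a small lemma (Lemma~\ref{lem:inductive}): $(p,\mathcal{D})$ has the finiteness property iff $a\in R(p,\mathcal{D})$ and $\alpha\in\Delta$ always imply $a+\alpha\in R(p,\mathcal{D})$. Then, for $a(x)\equiv\sum_{j\ge 0}d_jx^j$ and $\alpha\in\Delta$, it propagates the perturbation $\alpha$ forward coefficient by coefficient. Condition~(i) is used exactly as the statement ``digit $+$ element of $Z$ $=$ new digit $+$ $p_0\cdot$(new carry in $\Delta$)'', i.e.\ to push a carry $\delta_t\in\Delta$ from position $t$ to position $t+1$ while keeping $b_t\in\mathcal{D}$. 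Once $t\ge\ell$ the incoming digit is $0$, so the coefficient lies in $Z$ itself; condition~(ii) then forces the new carry $\delta_t\in\{0,1\}$. After $n$ further steps all live carries are in $\{0,1\}$, so the remaining coefficients are of the form $\sum_{j\in J}p_j$, which lie in $\mathcal{D}$ by condition~(iii). This terminates the expansion. No expansiveness of $p$ and no height-decrease argument is ever needed.

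Your plan, by contrast, hinges on showing that $H(T_p(b))<H(b)$ outside a finite set $G$. But the hypotheses (i)--(iii) do not give you this. They control sums of the shape $\sum_{j=1}^n\delta_jp_j$ with $\delta_j\in\Delta$, i.e.\ \emph{bounded} carries; they say nothing about the size of $q=(b_0-d_0)/p_0$ when $b_0$ is far from $\mathcal{D}$, and it is precisely $-q$ that becomes the new leading coefficient of $T_p(b)$. A genuine contraction of $T_p$ requires the expansiveness of $\prod_i p^{(i)}$ (as in the proof of Theorem~\ref{non-ECNS1}), which is a \emph{consequence} of the finiteness property (Proposition~\ref{Prop1}), not something you have in hand. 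So your step~3 is circular unless you first derive expansiveness from (i)--(iii), and you give no indication how to do that. The clean fix is to reverse the direction of the argument: work forward, adding generators of $\mathcal{O}$ to elements already known to be in $R(p,\mathcal{D})$, exactly as the hypotheses are designed to allow.
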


We note that in the statement of Theorem~\ref{th:1} the set $\Delta$ from \eqref{eq:deltaz} can be replaced by an arbitrary set that contains $0$ and generates $\mathcal{O}$ as a semigroup and the result still remains true by the same proof. Since we only need Theorem~\ref{th:1} for our particular choice of $\Delta$ we stated the theorem for this particular case.

To prove Theorem~\ref{th:1} we need the following auxiliary result.

\begin{lemma}\label{lem:inductive}
The GNS $(p,\mathcal{D})$ has the finiteness property if and only if
for each $a\in R(p,\mathcal{D})$ and each $\alpha \in \Delta$ we have $a+\alpha \in R(p,\mathcal{D})$.
\end{lemma}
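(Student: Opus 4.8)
The plan is to prove the two implications separately, with the forward direction being trivial and the reverse direction being the substantive content. For the forward direction, if $(p,\mathcal{D})$ has the finiteness property then $R(p,\mathcal{D}) = \mathcal{O}[x]$, so for any $a \in R(p,\mathcal{D})$ and any $\alpha \in \Delta \subset \mathcal{O} \subset \mathcal{O}[x]$ we trivially have $a + \alpha \in \mathcal{O}[x] = R(p,\mathcal{D})$. So the work is all in the converse.

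For the converse, assume that $R(p,\mathcal{D})$ is closed under adding elements of $\Delta$; I want to conclude $R(p,\mathcal{D}) = \mathcal{O}[x]$. First I would record the elementary closure properties of $R(p,\mathcal{D})$ that follow directly from the definition of the backward division map $T_p$: namely, $0 \in R(p,\mathcal{D})$ (empty representation), and $a \in R(p,\mathcal{D})$ if and only if $xa + d \in R(p,\mathcal{D})$ for any (equivalently, some) $d \in \mathcal{D}$ — this is just the statement that prepending a digit to a finite expansion keeps it finite, and conversely that $T_p$ strips a digit. From these, $R(p,\mathcal{D})$ is closed under multiplication by $x$ and (by iterating) contains $\mathcal{D}[x]$, i.e. all polynomials with coefficients in $\mathcal{D}$. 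The strategy is then a double induction: by the $x$-closure it suffices to show that every $\beta \in \mathcal{O}$ (constant polynomial) lies in $R(p,\mathcal{D})$, because a general $a = \sum_{j} a_j x^j \in \mathcal{O}[x]$ can be built up as $a = x(\cdots x(x a_{n} + \text{something}) \cdots)$ — more precisely, one shows by induction on $\deg a$ that if all $\beta \in \mathcal{O}$ are in $R(p,\mathcal{D})$ and $R$ is $x$-closed then all of $\mathcal{O}[x]$ is in $R(p,\mathcal{D})$: write $a = x a' + a(0)$ with $\deg a' < \deg a$, note $a' \in R(p,\mathcal{D})$ by induction, hence $x a' \in R(p,\mathcal{D})$, and then $xa' + a(0) = (xa' + d_0) + (a(0) - d_0)$ where $d_0 \in \mathcal{D}$ is the digit congruent to $a(0) \bmod p(0)$; here $xa'+d_0 \in R$ and $a(0)-d_0$ is a $\Z$-combination — actually an $\mathcal{O}$-multiple situation — that one pushes through the $\Delta$-closure hypothesis.

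The main obstacle, and the place where the hypothesis "$\Delta$ generates $\mathcal{O}$ as a semigroup" (guaranteed by Lemma~\ref{lem:neighborbasis}, since $\mathcal{N}\cdot\boldsymbol\omega$ contains a $\Z$-basis of $\mathcal{O}$ together with $0$, hence generates $\mathcal{O}$ additively once one also has the negatives — one should check $\Delta$ is symmetric or otherwise spans as a semigroup, which it is since $\mathbf z \in \mathcal{N} \iff -\mathbf z \in \mathcal{N}$) is the step reducing an arbitrary $\beta \in \mathcal{O}$ to $0 \in R(p,\mathcal{D})$. Here I would argue: write $\beta$ as a finite sum $\beta = \alpha_1 + \cdots + \alpha_r$ with each $\alpha_i \in \Delta$, using the semigroup-generation property; since $0 \in R(p,\mathcal{D})$, apply the closure hypothesis $r$ times to get $\alpha_1 \in R$, then $\alpha_1 + \alpha_2 \in R$, and so on up to $\beta \in R$. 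Once every constant is in $R(p,\mathcal{D})$, combine with $x$-closure and the degree induction above to conclude $R(p,\mathcal{D}) = \mathcal{O}[x]$, which is exactly the finiteness property. The one genuinely delicate point to get right is the interplay between "closed under adding a fixed $\alpha \in \Delta$" and the degree-reduction step for non-constant $a$: one must verify that $a(0) - d_0$, which is a multiple of $p(0)$, lies in the sub-semigroup of $\mathcal{O}[x]$ generated (over $R$) by the operations "add $\alpha \in \Delta$" and "multiply by $x$, then add a digit", and this is where one invokes that $\Delta$ generates all of $\mathcal{O}$.
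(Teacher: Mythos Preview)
Your approach is correct, and it differs in organization from the paper's. Both arguments start from the observation that $\Delta$ generates $\mathcal{O}$ as a semigroup (via the symmetry of $\mathcal{N}$ and Lemma~\ref{lem:neighborbasis}), but the reductions proceed differently. The paper shows directly that $R(p,\mathcal{D})$ is closed under adding $\alpha x^m$ for every $\alpha\in\Delta$ and every $m\ge 0$: writing $a\equiv\sum_j d_j x^j \pmod p$, one splits the expansion at position $m$ as
\[
a+\alpha x^m \equiv \sum_{j<m} d_j x^j + x^m(\tilde a + \alpha)\pmod p,
\]
with $\tilde a=\sum_{j\ge m} d_j x^{j-m}\in R(p,\mathcal{D})$, which immediately reduces to the $m=0$ case (the hypothesis). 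Your route instead records that $R(p,\mathcal{D})$ is closed under $a\mapsto xa$ (prepend the digit $0\in\mathcal{D}$) and, by the hypothesis together with semigroup generation, closed under adding any constant from $\mathcal{O}$; a degree induction via $a=xa'+a(0)$ then finishes. Both are short; the paper's version handles all monomials $\alpha x^m$ uniformly and so avoids the explicit induction on degree, while yours makes the two basic closure operations (multiply by $x$, add a constant) explicit.

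One comment on your writeup: the detour through $d_0$ and the decomposition $xa'+a(0)=(xa'+d_0)+(a(0)-d_0)$ is unnecessary, and the step you flag as ``genuinely delicate'' is not delicate at all. Once $R(p,\mathcal{D})$ is closed under adding each $\alpha\in\Delta$, it is closed under adding any $\beta\in\mathcal{O}$ (write $\beta$ as a finite sum in $\Delta$), so from $xa'\in R(p,\mathcal{D})$ you get $xa'+a(0)\in R(p,\mathcal{D})$ directly; the fact that $a(0)-d_0$ is a multiple of $p(0)$ plays no role.
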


\begin{proof}
The necessity of the condition is obvious, so we are left with proving its sufficiency. Assume that for each $a\in R(p,\mathcal{D})$ and each $\alpha \in \Delta$ we have $a+\alpha \in R(p,\mathcal{D})$. By Lemma~\ref{lem:neighborbasis}, the set $\Delta$ generates $\mathcal{O}$ as a semigroup. Thus in order to prove the finiteness property it is sufficient to show that $a \in R(p,\mathcal{D})$ implies that 
\begin{equation}\label{eq:assIV}
a+\alpha x^m \in R(p,\mathcal{D}) \hbox{ for each } \alpha \in \Delta \hbox{ and each }m\ge 0. 
\end{equation}
The case  $m=0$ is true by assumption. Now choose $m\ge 1$. Let $a\in R(p,\mathcal{D})$. To conclude the proof we have to show that $a+\alpha x^{m} \in R(p,\mathcal{D})$ holds for each $\alpha \in \Delta$. We may write $a(x) \equiv \sum_{j=0}^{\ell-1}d_j x^j\pmod{p}$ with $d_0,\ldots, d_{\ell-1}\in\mathcal{D}$. Then
\begin{equation}\label{eq:Aontheway}
a(x)+ \alpha x^{m} \equiv \sum_{j=0}^{m-1} d_jx^j + x^{m}(\tilde a(x) + \alpha) \pmod{p}
\end{equation}
holds with $\tilde a(x)=\sum_{j=m}^{\ell-1} d_jx^{j-m} \in R(p,\mathcal{D})$. Since $\alpha \in \Delta$, and \eqref{eq:assIV} holds for $m=0$ we have $\tilde a(x) + \alpha \in R(p,\mathcal{D})$ as well and, hence, \eqref{eq:Aontheway} implies that $a(x) + \alpha x^{m} \in R(p,\mathcal{D})$.
\end{proof}

After this preparation we turn to the proof of Theorem~\ref{th:1}.

\begin{proof}[Proof of Theorem~\ref{th:1}]
Our goal is to apply Lemma~\ref{lem:inductive}. To this end let $a \in R(p,\mathcal{D})$ and $\alpha \in \Delta$ be given. We have to show that $a(x) + \alpha \in R(p,\mathcal{D})$.

Since $a \in R(p,\mathcal{D})$ we may write $a(x)\equiv \sum_{j=0}^{\ell-1}d_j x^j\pmod{p}$ with $d_0,\ldots,d_{\ell-1}\in\mathcal{D}$. For convenience, in what follows we set $d_j =0$ for $j \ge \ell$, $p_n=1$, and $p_j=0$ for $j > n$. Then $a(x)\equiv \sum_{j=0}^{\infty}d_j x^j \pmod{p}$. Since $\alpha + d_0 \in Z + \mathcal{D}$ (note that $\Delta \subset Z$), condition (i) implies that there is $\delta_0 \in \Delta$ and $b_0 \in \mathcal{D}$ such that $\alpha + d_0 = b_0 - \delta_0 p_0$. Adding $\delta_0p(x)$ to $a(x)+\alpha$ thus yields
\begin{equation}\label{eq:startt0}
a(x)+\alpha \equiv b_0 + \sum_{j=1}^{\infty} (d_j + \delta_0 p_j) x^j \pmod{p}.
\end{equation}
We want to prove that for each $t \ge 0$ the sum $a(x)+\alpha$ can be written in the form
\begin{equation}\label{eq:tbel}
a(x)+\alpha \equiv \sum_{j=0}^t b_j x^j + \sum_{j=t+1}^\infty (d_j + \delta_0p_j + \delta_1p_{j-1} + \cdots + \delta_tp_{j-t})x^j\bmod{p}
\end{equation}
with $b_j\in \mathcal{D}$ and $\delta_j\in \Delta$ for $0\le j \le t$. Indeed, we prove this by induction. Since this is true for $t=0$ by \eqref{eq:startt0} assume that it is true for some given value $t \ge 0$. The coefficient of $x^{t+1}$ in \eqref{eq:tbel} is $d_{t+1} + s$ with
\[
s=  \delta_0p_{t+1} + \delta_1p_{t} + \cdots + \delta_tp_{1}.
\]
As $p_j=0$ for $j>n$ the sum $s$ has at most $n$ nonzero summands each of which is of the form $\delta_jp_{t+1-j}$ with $\delta_j\in \Delta$ and $t-n+1\le j \le t$. Thus $s\in Z$ and, hence, $d_{t+1} + s \in \mathcal{D} + Z$. Now by condition (i) there exists $b_{t+1} \in \mathcal{D}$ and $\delta_{t+1} \in \Delta$ such that
\[
d_{t+1} + s = b_{t+1} - \delta_{t+1} p_0.
\]
Thus, adding $\delta_{t+1}p(x)x^{t+1}$  to \eqref{eq:tbel} we obtain a similar expression for $a(x)+\alpha$ with $t$ replaced by $t+1$. Thus, by induction, \eqref{eq:tbel} holds for all $t\ge 0$. Note that the sum in \eqref{eq:tbel} is finite since $p_j=0$ for $j>n$.

Assume now that $t \ge \ell-1$ in \eqref{eq:tbel}. Then for $j \ge t+1$ we have $d_j=0$ and, hence, the coefficient of $x^j$ has the form $\delta_0p_{j} + \delta_1p_{j-1} + \cdots + \delta_tp_{j-t} \in Z$. By (ii) this implies that $\delta_0p_{j} + \delta_1p_{j-1} + \cdots + \delta_tp_{j-t} \in \mathcal{D} \cup (\mathcal{D}  - p_0)$. This entails that $\delta_j\in \{0,1\}$ for $j \ge t+1$. Hence, if $t \ge \ell-1 + n$ for each of the nonzero summands of $\delta_0p_{j} + \delta_1p_{j-1} + \cdots + \delta_tp_{j-t}$ the coefficient $\delta_i$ equals $1$ and thus the sum belongs to $\mathcal{D}$ by (iii). Consequently, in the representation \eqref{eq:tbel} for $t \ge \ell - 1 + n$ all the coefficients belong to $\mathcal{D}$ and, since this sum is finite, $a(x)+ \alpha \in R(p,\mathcal{D})$.  Thus the condition of Lemma~\ref{lem:inductive} is satisfied and we may apply the lemma to conclude that $(p,\mathcal{D})$ is an GNS with finiteness property. This proves the theorem.
\end{proof}

\section{The finiteness property for large constant terms}\label{sec:largegns}

One of the main results of this paper is a generalization of a result of B.~Kov\'acs~\cite[Section~3]{Kovacs} that will be stated and proved in the present section. We begin with some notation. We denote by $\mathbf{e}_1=(1,0,\ldots,0)\in \mathbb{R}^k$ the first canonical basis vector of $\mathbb{R}^k$. Let $M\subset \mathbb{R}^k$. For $\varepsilon > 0$ we set
\[
(M)_\varepsilon := \{\mathbf{x} \in \mathbb{R}^k\;:\; ||\mathbf{x}-\mathbf{y}||_\infty < \varepsilon \hbox{ for some } \mathbf{y}\in M \}
\]
for the \emph{$\varepsilon$}-neighborhood of a set $M$. Moreover, ${\rm int}_+$ is the interior taken w.r.t.\ the subspace topology on $\{(r_1,\ldots, r_k) \in \mathbb{R}^k\;:\; r_1 \ge 0\}$. The symbol ${\rm int}_-$ is defined by replacing $r_1 \ge 0$ with $r_1 \le 0$.


\begin{theorem}\label{th:newKovacs}
Let $\mathbb{K}$ be a number field of degree $k$ and let $\mathcal{O}$ be an order in $\mathbb{K}$. Let a monic polynomial $p\in \mathcal{O}[x]$ and a bounded fundamental domain $\mathcal{F}$ for the action of $\mathbb{Z}^k$ on $\mathbb{R}^k$ be given. Suppose that
\begin{itemize}
\item $\mathbf{0} \in {\rm int}(\mathcal{F} \cup (\mathcal{F}-\mathbf{e}_1))$ and
\item $\mathbf{0} \in {\rm int}_+(\mathcal{F})$.
\end{itemize}
Then there is $\eta > 0$ such that $(p(x + \alpha),D_\mathcal{F})$ has the finiteness property whenever $\alpha = m_1\omega_1 + \dots + m_k \omega_k\in \mathcal{O}$ satisfies $\max\{1,|m_2|,\ldots,|m_k|\}< \eta m_1 $.
\end{theorem}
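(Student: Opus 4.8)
The plan is to deduce Theorem~\ref{th:newKovacs} from the general criterion in Theorem~\ref{th:1} applied to the shifted polynomial $q(x) := p(x+\alpha)$. Writing $p(x) = x^n + p_{n-1}x^{n-1} + \cdots + p_0$ and expanding, the constant term of $q$ is $q(0) = p(\alpha)$, and in general $q(x) = x^n + q_{n-1}x^{n-1} + \cdots + q_0$ where $q_j = \sum_{i \ge j}\binom{i}{j}p_i\alpha^{i-j}$. The key point is that, in the embedding $\iota:\mathbb{K}\to\mathbb{R}^k$ of Lemma~\ref{lem:domaindigit}, the element $\alpha = m_1\omega_1 + \cdots + m_k\omega_k$ with $m_1$ large and $|m_2|,\ldots,|m_k|$ small compared to $m_1$ behaves like a large positive real multiple of $\mathbf{e}_1$: its image lies in a narrow cone around the positive $r_1$-axis. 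Consequently $q(0) = p(\alpha)$ is ``dominant'' — its image in $\mathbb{R}^k$ is large and points essentially along $\mathbf{e}_1$ — while the ratios $q_j / q_0$ for $j \ge 1$ become small. First I would make this precise: fix a norm, show $\iota(p(\alpha)) = m_1^n \mathbf{e}_1 + o(m_1^n)$ in a suitable sense (more carefully, that $p(\alpha)/\alpha^{(1)}{}^j$ dominates the other conjugates appropriately), and derive that $q_0$ is, up to a controlled error, a large positive real in the first coordinate.

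Next I would verify the three hypotheses of Theorem~\ref{th:1} for $(q, D_\mathcal{F})$ when $m_1$ is large and $\eta$ is chosen small. Recall $D_\mathcal{F} = D_{\mathcal{F},q(0)} = (q(0)\cdot(\mathcal{F}\cdot\boldsymbol\omega))\cap\mathcal{O}$. The sets $\Delta = \mathcal{N}\cdot\boldsymbol\omega$ and $Z = \{\sum_{j=1}^n \delta_j q_j : \delta_j \in \Delta\}$ need to be controlled: $\Delta$ is a fixed finite set (depending only on $\mathcal{F}$), and the entries $q_1,\ldots,q_n$ of $q$ grow only like $m_1^{n-1}$ while $q_0$ grows like $m_1^n$, so $Z$ is ``small relative to $q_0$''. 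Condition (iii), $\{\sum_{j\in J}q_j : J\subseteq\{1,\ldots,n\}\}\subseteq D_\mathcal{F}$, follows because each such sum, divided by $q(0)$, lands near $\mathbf{0}$ in $\mathbb{R}^k$, and $\mathbf{0}\in{\rm int}(\mathcal{F}\cup(\mathcal{F}-\mathbf{e}_1))$ together with $\mathbf{0}\in{\rm int}_+(\mathcal{F})$ forces a whole one-sided neighborhood of $\mathbf{0}$ into $\mathcal{F}$ — precisely the direction in which $\sum_{j\in J}q_j/q(0)$ approaches $\mathbf{0}$ (a small positive-$r_1$ vector), since $q_j/q(0)\approx (\text{something})/m_1$ points back toward the origin along roughly $-\mathbf{e}_1$ or stays in the $r_1\ge0$ halfspace depending on signs; this is exactly why the two topological hypotheses on $\mathcal{F}$ are split into an ``${\rm int}$'' part and an ``${\rm int}_+$'' part. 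Conditions (i) and (ii) are of the same flavor: $Z + D_\mathcal{F} \subset D_\mathcal{F} + q_0\Delta$ says that adding a ``small'' element $z\in Z$ to a digit $d$ and then correcting by a multiple of $q_0$ from the finite set $q_0\Delta$ lands back in $D_\mathcal{F}$; dividing through by $q_0$ this becomes a statement that $\mathcal{F}\cdot\boldsymbol\omega + (\text{small})$ is covered by $(\mathcal{F}\cdot\boldsymbol\omega) + \Delta$, which holds because $\Delta$ contains a lattice basis (Lemma~\ref{lem:neighborbasis}) and $\mathcal{F}$ tiles, so the $\varepsilon$-neighborhood $(\mathcal{F})_\varepsilon$ is contained in $\mathcal{F} + \mathcal{N}$ for the finite neighbor set — and the error term from $z$ has $\ell^\infty$-norm $o(1)$ as $m_1\to\infty$. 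Condition (ii), $Z\subset D_\mathcal{F}\cup(D_\mathcal{F}-q_0)$, similarly becomes: every element of $Z/q_0$ (a point near $\mathbf{0}$) lies in $\mathcal{F}\cdot\boldsymbol\omega$ or in $(\mathcal{F}-\mathbf{e}_1)\cdot\boldsymbol\omega$, which is guaranteed by $\mathbf{0}\in{\rm int}(\mathcal{F}\cup(\mathcal{F}-\mathbf{e}_1))$.

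The main obstacle, and the step requiring the most care, is controlling the geometry of $\iota(p(\alpha))$ and of the quotients $q_j/q(0)$ \emph{uniformly} in the direction as $m_1\to\infty$ under the constraint $\max\{1,|m_2|,\ldots,|m_k|\}<\eta m_1$. One must show not merely that $|q(0)|\to\infty$ but that, after dividing, the relevant points approach $\mathbf{0}$ from the correct side (the $r_1\ge 0$ halfspace, or within the cone where $\mathcal{F}$ is known to contain a neighborhood). Concretely: the first conjugate $\alpha^{(1)} = m_1 + m_2\omega_2^{(1)} + \cdots$ is real-large-positive and dominates, $p(\alpha^{(1)}) \sim (m_1)^n > 0$; the other conjugates $\alpha^{(j)}$ may be complex but are also large in modulus (comparable to $m_1$), so $p(\alpha^{(j)})$ has modulus $\sim m_1^n$ too, hence $q_i/q(0)$ has \emph{all} conjugates of modulus $O(1/m_1)$, i.e.\ $\iota(q_i/q(0))\to\mathbf{0}$. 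The subtlety is that ``small coordinates $m_2,\ldots,m_k$'' must be quantified against the shape of $\mathcal{F}$ near $\mathbf{0}$: the choice of $\eta$ will depend on how large an $\varepsilon$-neighborhood (one-sided, in the ${\rm int}_+$ case) fits inside $\mathcal{F}\cup(\mathcal{F}-\mathbf{e}_1)$ and inside $\mathcal{F}$ respectively, and on the finite data $p$, $\mathcal{N}$, $\boldsymbol\omega$. Once these estimates are in place — essentially a compactness/continuity argument showing the error terms are $o(1)$ uniformly — the verification of (i)--(iii) is routine and Theorem~\ref{th:1} delivers the finiteness property, completing the proof.
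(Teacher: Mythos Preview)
Your approach is essentially the paper's: apply Theorem~\ref{th:1} to $q(x)=p(x+\alpha)$, show that $q_0=p(\alpha)$ dominates the other coefficients so that elements of $Z$ (and the sums $\sum_{j\in J}q_j$) become small after division by $q_0$, and then read off conditions (i)--(iii) from the two topological hypotheses on $\mathcal{F}$. Your treatment of (i) via $(\mathcal{F})_\varepsilon\subset\mathcal{F}+\mathcal{N}$ and of (ii) via $\mathbf{0}\in{\rm int}(\mathcal{F}\cup(\mathcal{F}-\mathbf{e}_1))$ matches the paper exactly.

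The one place where your sketch is genuinely unclear is condition~(iii). You assert that $\sum_{j\in J}q_j/q_0$ is ``a small positive-$r_1$ vector'', but your justification wavers (``points back toward the origin along roughly $-\mathbf{e}_1$ or stays in the $r_1\ge 0$ halfspace depending on signs''). Smallness alone does not suffice: ${\rm int}_+(\mathcal{F})$ only gives you a \emph{one-sided} neighborhood of $\mathbf{0}$, so you must actually prove that the first coordinate of the quotient is non-negative. The paper does this by expanding $q_l=\sum_j p_{jl}(\alpha)\omega_j$ with $p_{jl}(\alpha)\in\mathbb{Z}$, observing that the dominant integer coefficient $p_{1l}(\alpha)$ is $\binom{n}{l}m_1^{\,n-l}+O(\eta\, m_1^{\,n-l})\ge 0$ for small $\eta$, and then showing that when one divides $\zeta'=\sum_{j\in J}q_j$ by $q_0$ in $\mathbb{K}$ and writes the result as $\sum r_j'\omega_j$, the index $j_0$ maximizing $|r_{j_0}'|$ must be $j_0=1$ and $r_1'=\zeta_1'/p_{10}(\alpha)\cdot(1+O(\eta))>0$. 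Your conjugate-based heuristic (all $|\alpha^{(i)}|\sim m_1$, hence $|q_j^{(i)}/q_0^{(i)}|=O(1/m_1)$) gives smallness but not the sign; you still need to pass back to the $\boldsymbol\omega$-coordinates and argue positivity there, since that is the coordinate system in which $\mathcal{F}$ and ${\rm int}_+$ are defined.
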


\begin{rem}
Note that this implies that for each bounded fundamental domain $\mathcal{F}$ satisfying \begin{itemize}
\item $\mathbf{0} \in {\rm int}(\mathcal{F} \cup (\mathcal{F}-\mathbf{e}_1))$ and
\item $\mathbf{0} \in {\rm int}_+(\mathcal{F})$
\end{itemize}
the family $\mathcal{G}_\mathcal{F}$ of GNS contains infinitely many GNS with finiteness property.
\end{rem}

\begin{proof}
Our goal is to apply Theorem~\ref{th:1}.

Choose $\varepsilon_1>0$ in a way that the $\varepsilon_1$-ball around $\mathbf{0}$ in $\R^k$ w.r.t.\ the norm $||\cdot||_\infty$ is contained in ${\rm int}(\mathcal{F} \cup (\mathcal{F}-\mathbf{e}_1))$. Since the union $\mathcal{F} + \mathbb{Z}^k$ is a locally finite union of bounded sets, the definition of the neighbor set $\mathcal{N}$ implies that there exists $\varepsilon_2 >0$ such that $(\mathcal{F})_{\varepsilon_2} \cap (\mathcal{F} + \mathbf{z}) = \emptyset$ for each $\mathbf{z} \in\mathbb{Z}^k\setminus\mathcal{N}$. Let now 
\begin{equation}\label{eq:eeppss}
\varepsilon = \min\{\varepsilon_1,\varepsilon_2\}. 
\end{equation}

We write $p(x+\alpha)=x^n+p_{n-1}(\alpha)x^{n-1}+\cdots+p_0(\alpha)$. Then there exist polynomials $q_j\in \mathbb{Z}[x]$ such that
\[
p(x+\alpha) = \sum_{j=1}^k q_j(x+\alpha)\omega_j = \sum_{j=1}^k \left(
\delta_{j1}x^n+p_{j,n-1}(\alpha)x^{n-1}+\cdots+p_{j0}(\alpha)
\right)\omega_j
\]
with $p_{jl}(\alpha)\in \mathbb{Z}$ and $\delta_{ij}$ being the Kronecker symbol. It is easy to see from the definition of these coefficients (see also~\cite[p.~294]{KovacsPetho}) that $p_{10}(\alpha)$ grows faster than all the other coefficients if $\eta\to 0$, more precisely, we have
\begin{equation}\label{eq:pc1}
p_{jl}(\alpha) \ll \eta p_{10}(\alpha), \qquad (j,l)\not=(1,0),\quad 1\le j\le k, \quad 0\le l < n
\end{equation}
for $\eta\to0$ (note that $\eta\to 0$ entails that $m_1 \to \infty$). Moreover, we see that
\begin{equation}\label{eq:pc3}
p_{jl}(\alpha) \ll \eta p_{1l}(\alpha), \qquad \quad 2\le j\le k, \quad 0\le l < n,
\end{equation}
for $\eta\to0$ and,
\begin{equation}\label{eq:pc2}
p_{1l}(\alpha) \ge 0 \qquad \hbox{for }0\le l < n \hbox{ and } \eta \hbox{ small (and, hence, $m_1$ large) enough}.
\end{equation}

Let now $\zeta \in Z=Z(\alpha)$ be given\footnote{By the notation $Z(\alpha)$ we indicate that the set $Z$ depends on $\alpha$ since the coefficients $p_j(\alpha)$ are functions in $\alpha$.}. Then by the definition of $Z$ the estimates in \eqref{eq:pc1} imply
that $\zeta=\zeta_1\omega_1+\cdots + \zeta_k\omega_k$ with
\begin{equation}\label{eq:zc:1}
\zeta_{j} \ll \eta p_{10}(\alpha), \quad 1\le j\le k,
\end{equation}
for $\eta\to0$.

We now show that 
\[
\zeta + D_{\mathcal{F},p_0(\alpha)} \subset \bigcup_{\delta \in \Delta} (D_{\mathcal{F},p_0(\alpha)} + p_0(\alpha)\delta) \quad\hbox{and}\quad\zeta \in D_{\mathcal{F},p_0(\alpha)} \cup (D_{\mathcal{F},p_0(\alpha)} - p_0(\alpha))
\]
holds for small $\eta$. Note first that there exists $\mathbf{r}=(r_1,\ldots, r_k) \in \mathbb{Q}^k$ with
\begin{equation}\label{eq:zetaquot}
\frac{\zeta}{p_0(\alpha)} = r_1\omega_1+ \cdots + r_k \omega_k.
\end{equation}
Since $p_0(\alpha) = \sum_{j=1}^k p_{j0}(\alpha)\omega_j$ this implies that
\[
\zeta_1\omega_1+\cdots + \zeta_k\omega_k = (r_1\omega_1+ \cdots + r_k \omega_k)(p_{10}(\alpha)\omega_1+ \cdots + p_{k0}(\alpha)\omega_k).
\]
Now multiplying the brackets on the right hand side and observing that $\omega_1=1$ the estimate in \eqref{eq:pc1} yields, setting $r=\max\{|r_1|,\ldots, |r_k|\}$,
\[
\zeta_1\omega_1+\cdots + \zeta_k\omega_k = p_{10}(\alpha)(r_1+O(\eta r)) \omega_1+ \cdots + p_{10}(\alpha)(r_k +O(\eta r))\omega_k.
\]
Let $j_0$ be an index with $r=|r_{j_0}|$. For this index we have
\[
\zeta_{j_0} =  p_{10}(\alpha)r_{j_0}(1+O(\eta)).
\]
Using \eqref{eq:zc:1} this implies that $r=|r_{j_0}|$ tends to zero for $\eta \to 0$. Thus for $\eta$ small enough we have $r < \varepsilon$ with $\varepsilon$ as in \eqref{eq:eeppss} and, hence, $||\mathbf{r}||_\infty=||(r_1,\ldots,r_k)||_\infty < \varepsilon$. By the choice of $\varepsilon$ this implies that 
\begin{equation}\label{eq:rel1}
\mathbf{r} + \mathcal{F} \subset \bigcup_{\mathbf{n}\in \mathcal{N}} (\mathcal{F} + \mathbf{n}) \quad\hbox{and}\quad \mathbf{r} \in \mathcal{F} \cup (\mathcal{F} - \mathbf{e}_1)
\end{equation}
hold for $\eta$ small enough. Multiplying both relations in \eqref{eq:rel1} by $p_0(\alpha) \cdot \boldsymbol{\omega}$ this yields by \eqref{eq:zetaquot} and the definition of $\Delta$ in \eqref{eq:deltaz} that 
\begin{equation}\label{eq:rel2}
\begin{split}
\zeta + p_0(\alpha)\cdot(\mathcal{F}\cdot \boldsymbol{\omega}) &\subset \bigcup_{\delta\in \Delta} p_0(\alpha)\cdot(\mathcal{F}\cdot \boldsymbol{\omega}) + p_0(\alpha)\delta \quad\hbox{and}\\ \zeta &\in p_0(\alpha)\cdot(\mathcal{F}\cdot \boldsymbol{\omega}) \cup (p_0(\alpha)\cdot(\mathcal{F}\cdot \boldsymbol{\omega}) - p_0(\alpha))
\end{split}
\end{equation} 
hold for $\eta$ small enough. Intersecting the relations in \eqref{eq:rel2} with $\mathcal{O}$, and using the definition of $D_{\mathcal{F},p_0(\alpha)}$ in \eqref{eq:DF} this implies that 
\[
\zeta + D_{\mathcal{F},p_0(\alpha)} \subset \bigcup_{\delta \in \Delta} (D_{\mathcal{F},p_0(\alpha)} + p_0(\alpha)\delta) \quad\hbox{and}\quad\zeta \in D_{\mathcal{F},p_0(\alpha)} \cup (D_{\mathcal{F},p_0(\alpha)} - p_0(\alpha))
\]
hold for $\eta$ small enough. Since $\zeta \in Z$ was arbitrary we have shown that there is $\eta_1 > 0$ with
\begin{equation}\label{eq:pf0}
Z+D_{\mathcal{F},p_0(\alpha)} \subset D_{\mathcal{F},p_0(\alpha)} + p_0(\alpha)\Delta \qquad \hbox{for } \eta < \eta_1
\end{equation}
and
\begin{equation}\label{eq:pf1}
Z \subset D_{\mathcal{F},p_0(\alpha)} \cup (D_{\mathcal{F},p_0(\alpha)}-p_0(\alpha)) \qquad \hbox{for }\eta < \eta_1.
\end{equation}
This implies conditions (i) and (ii) of Theorem~\ref{th:1}.

If we choose $\zeta'= \sum_{j \in J}p_j(\alpha)$ for some $J\subseteq \{1,\ldots, n\}$, there exist $r'_1,\ldots, r'_k \in \mathbb{Q}$ with
\[
\frac{\zeta'}{p_0(\alpha)} = r'_1\omega_1+ \cdots + r'_k \omega_k.
\]
and, hence, writing $\zeta' = \zeta_1'\omega_1 + \cdots+ \zeta_k'\omega_k$, we get
\begin{equation}\label{eq:zetaprimedef}
\zeta'_1\omega_1+\cdots + \zeta'_k\omega_k = (r_1'\omega_1+ \cdots + r'_k \omega_k)(p_{10}(\alpha)\omega_1+ \cdots + p_{k0}(\alpha)\omega_k).
\end{equation}
Then by the same arguments as above we derive that
\begin{equation}\label{eq:naz}
||(r_1',\ldots,r_k')||_\infty < \varepsilon \qquad \hbox{with $\varepsilon$ as in \eqref{eq:eeppss} and $\eta$ small enough.}
\end{equation}
Observe that by \eqref{eq:pc3} and \eqref{eq:pc2} we have
\begin{equation}\label{eq:nnzeta}
\zeta_j' \ll \eta \zeta_1' \qquad (2 \le j \le k)
\end{equation}
and $\zeta_1' >0$ for $\eta\to 0$. Let $j_0$ be an index with $|r'_{j_0}|=\max\{|r_1'|,\ldots,|r_k'|\}$ and assume that $j_0 \ge 2$. Then by \eqref{eq:zetaprimedef} and \eqref{eq:pc1}
\[
\zeta_{j_0}' = p_{10}(\alpha)r'_{j_0}(1+ O(\eta)) \gg p_{10}(\alpha)r'_{1} + O(p_{10}(\alpha) \eta r' )) = \zeta_1',
\]
a contradiction to $\eqref{eq:nnzeta}$. Thus $j_0=1$ and
\[
\zeta_{1}' = p_{10}(\alpha)r'_{1}(1+ O(\eta))
\]
and by \eqref{eq:pc2} we conclude that

\begin{equation}\label{eq:nbz}
r_1' >0 \qquad \hbox{for $\eta$ small enough.}
\end{equation}
Thus, by \eqref{eq:naz} and \eqref{eq:nbz} and a similar reasoning as above there is $\eta_2 > 0$ with
\begin{equation}\label{eq:pf2}
\bigg\{ \sum_{j\in J} p_j(\alpha) \;:\; J\subseteq\{1,\ldots, n\}  \bigg\} \subseteq D_{\mathcal{F},p_0(\alpha)} \qquad\hbox{for } \eta < \eta_2.
\end{equation}
This shows that also condition (iii) of Theorem~\ref{th:1} is stisfied.

Summing up we see that by \eqref{eq:pf0}, \eqref{eq:pf1}, and \eqref{eq:pf2} the result follows from Theorem~\ref{th:1} with $\eta=\min\{\eta_1,\eta_2\}$.
\end{proof}

Theorem~\ref{th:newKovacs} immediately admits the following corollary.

\begin{cor}\label{cor:newKovacs}
Let $\mathbb{K}$ be a number field of degree $k$ and let $\mathcal{O}$ be an order in $\mathbb{K}$. Let a monic polynomial $p\in \mathcal{O}[x]$ and a bounded fundamental domain $\mathcal{F}$ for the action of $\mathbb{Z}^k$ on $\mathbb{R}^k$ be given. If $\mathbf{0} \in {\rm int}(\mathcal{F})$ then there is $\eta > 0$ such that $(p(x + \alpha),D_\mathcal{F})$ has the finiteness property whenever $\alpha = m_1\omega_1 + \dots + m_k \omega_k\in \mathcal{O}$ satisfies $\max\{1,|m_2|,\ldots,|m_k|\}< \eta |m_1|$.
\end{cor}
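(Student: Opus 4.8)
The plan is to apply Theorem~\ref{th:1} to $p(x+\alpha)$, re-running the proof of Theorem~\ref{th:newKovacs} but exploiting that $\mathbf{0}\in{\rm int}(\mathcal{F})$ is strictly stronger than, and symmetric with respect to, the one-sided hypotheses used there. Indeed $\mathbf{0}\in{\rm int}(\mathcal{F})$ implies $\mathbf{0}\in{\rm int}(\mathcal{F}\cup(\mathcal{F}-\mathbf{e}_1))$ and $\mathbf{0}\in{\rm int}_+(\mathcal{F})$ together with their ``$-\mathbf{e}_1,\,{\rm int}_-$'' counterparts, so $m_1$ may now be driven to $+\infty$ or to $-\infty$. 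One option is to split off the case $m_1>0$ (handled verbatim by Theorem~\ref{th:newKovacs}) and only re-examine the case $m_1<0$; but it is cleaner to note that under $\mathbf{0}\in{\rm int}(\mathcal{F})$ the sign of $m_1$ plays no role at all, so a single pass through the argument covers both signs.

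First I would fix $\varepsilon>0$ small enough that the $\varepsilon$-ball about $\mathbf{0}$ in $(\R^k,\|\cdot\|_\infty)$ lies inside ${\rm int}(\mathcal{F})$ (this is where $\mathbf{0}\in{\rm int}(\mathcal{F})$ is used) and such that $(\mathcal{F})_\varepsilon\cap(\mathcal{F}+\mathbf{z})=\emptyset$ for every $\mathbf{z}\in\Z^k\setminus\mathcal{N}$. The coefficient comparison \eqref{eq:pc1}, namely $p_{jl}(\alpha)\ll\eta\,p_{10}(\alpha)$ for $(j,l)\neq(1,0)$, holds regardless of the sign of $m_1$: the condition $\max\{1,|m_2|,\dots,|m_k|\}<\eta|m_1|$ forces $|m_1|\to\infty$ as $\eta\to0$, whence $|\alpha^{(i)}|\asymp|m_1|$ for all $i$ and $|p_{10}(\alpha)|\asymp|m_1|^n$ dominates the remaining coefficients by a factor $O(\eta)$; the positivity statement \eqref{eq:pc2}, which is the only genuinely sign-dependent step in the proof of Theorem~\ref{th:newKovacs}, I would simply omit. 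With this, the computation of that proof shows that for every $\zeta\in Z(\alpha)$ and for every $\zeta'=\sum_{j\in J}p_j(\alpha)$, $J\subseteq\{1,\dots,n\}$, the quotients $\zeta/p_0(\alpha)$ and $\zeta'/p_0(\alpha)$ equal $\mathbf{r}\cdot\boldsymbol{\omega}$ for some $\mathbf{r}$ with $\|\mathbf{r}\|_\infty<\varepsilon$ once $\eta$ is small --- the bound uses only $|m_1|\to\infty$. Such an $\mathbf{r}$ lies in the $\varepsilon$-ball, hence in ${\rm int}(\mathcal{F})\subseteq\mathcal{F}$, and also satisfies $\mathbf{r}+\mathcal{F}\subseteq\bigcup_{\mathbf{n}\in\mathcal{N}}(\mathcal{F}+\mathbf{n})$ by the $\varepsilon$-separation of non-neighbours; multiplying by $p_0(\alpha)\cdot\boldsymbol{\omega}$ and intersecting with $\mathcal{O}$ (as in \eqref{eq:rel2}) then gives $Z+D_{\mathcal{F},p_0(\alpha)}\subseteq D_{\mathcal{F},p_0(\alpha)}+p_0(\alpha)\Delta$, $Z\subseteq D_{\mathcal{F},p_0(\alpha)}$, and $\{\sum_{j\in J}p_j(\alpha):J\subseteq\{1,\dots,n\}\}\subseteq D_{\mathcal{F},p_0(\alpha)}$, that is, conditions (i), (ii) and (iii) of Theorem~\ref{th:1}. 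Theorem~\ref{th:1} then yields the finiteness property, and the resulting $\eta$ works for all $\alpha$ with $\max\{1,|m_2|,\dots,|m_k|\}<\eta|m_1|$ (this inequality forces $|m_1|\ge1$, so everything in sight is well defined).

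I do not anticipate a real obstacle; the only delicate point is the bookkeeping just described, namely checking that the estimates in the proof of Theorem~\ref{th:newKovacs} that were phrased for $m_1\to+\infty$ survive verbatim, with absolute values inserted, for $m_1\to-\infty$, so that \eqref{eq:pc2} may be dropped in exchange for strengthening ${\rm int}_+$ to ${\rm int}$. I would also flag why the tempting shortcut ``mirror Theorem~\ref{th:newKovacs}, replacing $\mathbf{e}_1,\,{\rm int}_+$ by $-\mathbf{e}_1,\,{\rm int}_-$'' does not by itself cover $m_1<0$: for $m_1<0$ the sign of the first coordinate of $\zeta'/p_0(\alpha)$ in condition (iii) depends on $\min J$ and can be positive, so condition (iii) really does require the two-sided hypothesis $\mathbf{0}\in{\rm int}(\mathcal{F})$ rather than a one-sided variant.
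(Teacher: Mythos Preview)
Your proposal is correct and follows essentially the same route as the paper's proof: both argue via Theorem~\ref{th:1}, choose $\varepsilon$ so that the $\varepsilon$-ball about $\mathbf{0}$ lies in ${\rm int}(\mathcal{F})$ and separates $\mathcal{F}$ from its non-neighbours, and then show that the relevant quotients $\zeta/p_0(\alpha)$ land in this ball, yielding (i)--(iii). The only organisational difference is that the paper packages $Z$ and the sums $\sum_{j\in J}p_j(\alpha)$ into a single auxiliary set $Z'=\{\sum\delta_jp_j(\alpha):\delta_j\in(\mathcal{N}\cup\{\mathbf{e}_1\})\cdot\boldsymbol{\omega}\}$ and runs the estimate once, whereas you treat $\zeta\in Z$ and $\zeta'=\sum_{j\in J}p_j(\alpha)$ in parallel; mathematically these are the same.
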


\begin{proof}
Again we want to apply Theorem~\ref{th:1}.
Choose $\varepsilon_1>0$ in a way that the $\varepsilon_1$-ball around $\mathbf{0}$ w.r.t.\ $||\cdot||_\infty$ is contained in ${\rm int}(\mathcal{F})$. Since the union $\mathcal{F} + \mathbb{Z}^k$ is a locally finite union of bounded sets, the definition of the neighbor set $\mathcal{N}$ implies that there exists $\varepsilon_2 >0$ such that $(\mathcal{F})_{\varepsilon_2} \cap (\mathcal{F} + \mathbf{z}) = \emptyset$ for each $\mathbf{z} \not\in\mathcal{N}$. Let now $\varepsilon = \min\{\varepsilon_1,\varepsilon_2\}$.

Define
\[
\Delta' = (\mathcal{N}\cup\{\mathbf{e}_1\})\cdot\boldsymbol{\omega}
\quad
\hbox{and}
\quad
Z'=\bigg\{
\sum_{j=1}^n \delta_j p_j(\alpha) \;:\; \delta_j\in \Delta'
\bigg\}.
\]
Let now $\zeta \in Z'=Z'(\alpha)$ be given. In the same way as we showed \eqref{eq:pf0} and \eqref{eq:pf1} in the proof of Theorem~\ref{th:newKovacs} we can show, using $\varepsilon$ as defined above, that 
\[
\zeta + D_{\mathcal{F},p_0(\alpha)} \subset \bigcup_{\delta \in \Delta} (D_{\mathcal{F},p_0(\alpha)} + p_0(\alpha)\delta)\quad\hbox{and}\quad \zeta \in D_{\mathcal{F},p_0(\alpha)} 
\]
hold for small $\eta$. Thus, since $\zeta \in Z'$ was arbitrary and $Z \subset Z'$, there is $\eta_1 > 0$ with
\begin{equation*}\label{eq:pf0A}
Z+D_{\mathcal{F},p_0(\alpha)} \subset D_{\mathcal{F},p_0(\alpha)} + p_0(\alpha)\Delta \qquad \hbox{for } \eta < \eta_1,
\end{equation*}
\begin{equation*}\label{eq:pf1A}
Z \subset D_{\mathcal{F},p_0(\alpha)} \qquad \hbox{for }\eta < \eta_1.
\end{equation*}
Moreover, because $\left\{ \sum_{j\in J} p_j(\alpha) \;:\; J\subseteq\{1,\ldots, n\}  \right\} \subset Z'$,
\begin{equation*}
\bigg\{ \sum_{j\in J} p_j(\alpha) \;:\; J\subseteq\{1,\ldots, n\}  \bigg\} \subseteq D_{\mathcal{F},p_0(\alpha)} \qquad\hbox{for } \eta < \eta_1.
\end{equation*}
This implies conditions (i), (ii), and (iii) of Theorem~\ref{th:1} and we are done.
\end{proof}

\begin{rem} \label{rem:44}
Under the conditions of Theorem~\ref{th:newKovacs}
\[
\exists \; M\in\mathbb{N}:\;(p(x+m),\mathcal{F}) \hbox{ is a GNS with finiteness property for } m\ge M,
\]
while under the more restrictive conditions of Corollary~\ref{cor:newKovacs}
\[
\exists\; M\in\mathbb{N}:\; (p(x\pm m),\mathcal{F})  \hbox{ is a GNS with finiteness property for }\; m\ge M.
\]
\end{rem}

\begin{rem} \label{rem:45}
Looking back at Example~\ref{ex:ex} we see that canonical number systems and number systems corresponding to the sail satisfy the conditions of Theorem~\ref{th:newKovacs}. Symmetric CNS even satisfy the conditions of Corollary~\ref{cor:newKovacs}. The number systems corresponding to the square $\mathcal{F}=[0,1)^2$ do not fit in our framework. In this case, $\mathcal{F}$ needs to be translated appropriately in order to make our results applicable.
\end{rem}

We will see in the next section that $(p(x-m),\mathcal{F})$ doesn't have the finiteness property for large $m$ under the conditions of Theorem~\ref{th:newKovacs}.

Before this we deal with the following conjecture of Akiyama, see Brunotte \cite{Brunotte2017}: let $p\in \Z[x]$ be a CNS polynomial. Then there exists $M$ such that $p(x)+m$ is a CNS polynomial for all $m\ge M$. Theorem~\ref{th:newKovacs} implies results concerning this conjecture even for polynomials over orders.

\begin{cor} \label{cor:Akiyama}
Let $\mathbb{K}$ be a number field of degree $k$ and let $\mathcal{O}$ be an order in $\mathbb{K}$. Let a monic polynomial $p\in \mathcal{O}[x]$ and a bounded fundamental domain $\mathcal{F}$ for the action of $\mathbb{Z}^k$ on $\mathbb{R}^k$ be given. Suppose that $\mathbf{0} \in {\rm int}(\mathcal{F})$ then there is $\eta > 0$ such that $(p(x) \pm \alpha,D_\mathcal{F})$ has the finiteness property whenever $\alpha = m_1\omega_1 + \dots + m_k \omega_k\in \mathcal{O}$ satisfies $\max\{1,|m_2|,\ldots,|m_k|\}< \eta |m_1|$.
\end{cor}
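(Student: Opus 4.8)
The plan is to obtain the statement from Theorem~\ref{th:1}, proceeding exactly as in the derivation of Corollary~\ref{cor:newKovacs} from it, the only structural difference being that here one adds $\pm\alpha$ to the \emph{constant term} of $p$ instead of translating the variable. Write $p(x)=x^n+p_{n-1}x^{n-1}+\cdots+p_0$ and put $\tilde p(x):=p(x)\pm\alpha$; then $\tilde p$ is monic with the \emph{same} intermediate coefficients $p_1,\ldots,p_{n-1}$ as $p$, and only its constant term $\tilde p_0:=p_0\pm\alpha$ depends on $\alpha$. The crucial observation is that, as a consequence, the finite set $Z=\{\sum_{j=1}^n\delta_jp_j:\delta_j\in\Delta\}$ from \eqref{eq:deltaz} and the finite set $\{\sum_{j\in J}p_j:J\subseteq\{1,\ldots,n\}\}$ occurring in condition (iii) of Theorem~\ref{th:1} are both \emph{fixed} subsets of $\mathcal{O}$, independent of $\alpha$; only the digit set $D_{\mathcal F}=D_{\mathcal{F},\tilde p_0}$ changes. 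Write $W$ for the ($\alpha$-independent, finite) union of these two sets.

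To set up the application of Theorem~\ref{th:1} I would first fix $\varepsilon=\min\{\varepsilon_1,\varepsilon_2\}$ precisely as in the proof of Corollary~\ref{cor:newKovacs}: choose $\varepsilon_1$ so that the $\varepsilon_1$-ball around $\mathbf 0$ in $||\cdot||_\infty$ lies in ${\rm int}(\mathcal F)$ (possible by the hypothesis $\mathbf 0\in{\rm int}(\mathcal F)$), and $\varepsilon_2$ so that $(\mathcal F)_{\varepsilon_2}\cap(\mathcal F+\mathbf z)=\emptyset$ for every $\mathbf z\notin\mathcal N$. Then write $p_0=\sum_{j=1}^kc_j\omega_j$ and $\alpha=\sum_{j=1}^km_j\omega_j$, so that the $j$-th coordinate of $\tilde p_0$ is $c_j\pm m_j$. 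The hypothesis $\max\{1,|m_2|,\ldots,|m_k|\}<\eta|m_1|$ forces $|m_1|>1/\eta$, hence $|c_1\pm m_1|\to\infty$ as $\eta\to0$ while $|c_j\pm m_j|\le|c_j|+\eta|m_1|\ll\eta|c_1\pm m_1|$ for $2\le j\le k$ (with implied constant depending only on $p$) --- the exact analogue of the estimate \eqref{eq:pc1}. Copying the computation around \eqref{eq:zc:1}, for each fixed $\zeta\in W$ the rational vector $\mathbf r$ with $\zeta/\tilde p_0=\mathbf r\cdot\boldsymbol\omega$ satisfies $\zeta_\ell=(c_1\pm m_1)(r_\ell+O(\eta\,||\mathbf r||_\infty))$ coordinate-wise; since the coordinates of $\zeta$ remain bounded as $\zeta$ runs through the fixed finite set $W$ and $|c_1\pm m_1|\to\infty$, this forces $||\mathbf r||_\infty<\varepsilon$ for $\eta$ small, uniformly over the finitely many $\zeta\in W$.

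From $||\mathbf r||_\infty<\varepsilon$ the remainder goes through verbatim as in the proof of Theorem~\ref{th:newKovacs}: by the choice of $\varepsilon$ one gets $\mathbf r\in\mathcal F$ (here using $\mathbf 0\in{\rm int}(\mathcal F)$, as in the proof of Corollary~\ref{cor:newKovacs}) and $\mathbf r+\mathcal F\subset\bigcup_{\mathbf n\in\mathcal N}(\mathcal F+\mathbf n)$ (cf.\ \eqref{eq:rel1}); multiplying by $\tilde p_0\cdot\boldsymbol\omega$ and intersecting with $\mathcal O$ via \eqref{eq:DF} gives $\zeta\in D_{\mathcal{F},\tilde p_0}$ and $\zeta+D_{\mathcal{F},\tilde p_0}\subset D_{\mathcal{F},\tilde p_0}+\tilde p_0\Delta$ for every $\zeta\in W$. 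Since $Z\subseteq W$ this yields conditions (i) and (ii) of Theorem~\ref{th:1} (the latter in the strengthened form $Z\subset D_{\mathcal{F},\tilde p_0}$), and since the set in (iii) is contained in $W$ it yields (iii) as well; Theorem~\ref{th:1} then gives the finiteness property of $(p(x)\pm\alpha,D_{\mathcal F})$. I do not expect a real obstacle here --- the argument is actually lighter than that of Theorem~\ref{th:newKovacs}, since $W$ is fixed rather than growing with $\alpha$. The only point deserving care is the uniformity of the bound $||\mathbf r||_\infty<\varepsilon$ over $W$, which works because $W$ is finite and because the hypothesis contains the clause ``$1<\eta|m_1|$'', so that shrinking $\eta$ automatically enlarges $|m_1|$.
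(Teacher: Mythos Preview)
Your proposal is correct and follows exactly the route the paper intends: the paper's proof is the single line ``Repeat the proof of Corollary~\ref{cor:newKovacs} with $p(x)\pm\alpha$ instead of $p(x\pm\alpha)$'', and you have carried this out in detail. Your explicit observation that $Z$ and the set in condition~(iii) of Theorem~\ref{th:1} are \emph{fixed} (since only the constant term of $p$ changes) is a genuine simplification over the template proof of Corollary~\ref{cor:newKovacs}, where the enlarged set $Z'=Z'(\alpha)$ still varies with $\alpha$; this makes the uniformity of $\|\mathbf r\|_\infty<\varepsilon$ over $W$ immediate, exactly as you note.
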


\begin{proof}
  Repeat the proof of Corollary \ref{cor:newKovacs} with $p(x)\pm \alpha$ instead of $p(x\pm \alpha)$.
\end{proof}

\begin{rem}
  If $k=1$, and $0< \varepsilon<1$ then $\mathcal{F}_{\varepsilon}=[-\varepsilon,1-\varepsilon)$ satisfies the conditions of Corollary \ref{cor:Akiyama}, hence for any $p\in \Z[x]$ there exists $M\in \Z$ depending only on $\varepsilon$ and the size of the coefficients of $p$ such that $(p(x)\pm m, \mathcal{F}_{\varepsilon})$ is a GNS with finiteness property in $\Z[x]$.\\
The assumptions of Theorem \ref{th:newKovacs} hold for $\mathcal{F}_{\varepsilon}$ even if $ \varepsilon=0$. Hence, if all coefficients of $p$ are non-negative, then \eqref{eq:pc2} holds and we can conclude $r_1'\ge 0$ as in the proof of Theorem \ref{th:newKovacs}. Hence in this case $(p(x)+m, \mathcal{F}_{0})$ is a GNS with finiteness property in $\Z[x]$.\\
  However, if some of the coefficients of $p$ are negative, then \eqref{eq:pc2} and $r_1'\ge 0$ fails and, hence, we do not have similar statement. The example $p=x^2-2x+2$ shows that $(p(x)+m, \mathcal{F}_{0})$ is not a GNS with finiteness property in $\Z[x]$ for any $m\ge 0$.

\end{rem}
\begin{rem}
  If there are infinitely many units in $\mathcal{O}$ then for all $p\in \mathcal{O}[x]$ there exist infinitely many $\alpha \in \mathcal{O}$ such that the constant term of $p(x) + \alpha$, {\it i.e.}, $p(0) + \alpha$ is a unit, hence $p(x) + \alpha$ is not GNS with finiteness property. Notice that Condition (iii) of Theorem \ref{th:1} holds under the assumptions of Corollary \ref{cor:Akiyama} only if the norm of $p(0) + \alpha$ is large.
\end{rem}

\section{GNS without finiteness property}

The main result of this section complements the results of Section~\ref{sec:largegns}. We start with a partial generalization of \cite[Theorem~3]{KovacsPetho} to polynomials with coefficients of $\mathcal O$ that will be needed in its proof.

\begin{lemma} \label{non-ECNS2}
Let $(p,\mathcal{D})$ be a GNS. If there exist $h \in \mathbb{N}$, $d_0,d_1,\dots,d_{h-1} \in \mathcal{D}$ not all equal to $0$ and $q_1,q_2 \in \mathcal{O}[x]$ with
    \begin{equation} \label{non-ECNSeq1}
   \sum_{j=0}^{h-1} d_j x^j  =  (x^{h} - 1) q_1(x) + q_2(x)p(x).
    \end{equation}
then $(p,\mathcal{D})$ doesn't have the finiteness property.
  \end{lemma}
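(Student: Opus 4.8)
The plan is to exhibit an element of $\mathcal{O}[x]$ that has no finite digit representation, using the relation \eqref{non-ECNSeq1} to produce a nontrivial periodicity in the backward division process. Set $a(x) := \sum_{j=0}^{h-1} d_j x^j$; this is a nonzero element of $\mathcal{D}[x]\subset\mathcal{O}[x]$. From \eqref{non-ECNSeq1} we read off $a(x) \equiv (x^h-1)q_1(x) \pmod p$, equivalently $x^h\,q_1(x) \equiv q_1(x) + a(x) \pmod p$. The first step is to iterate this: for every $t\ge 1$ one gets
\[
x^{th} q_1(x) \equiv q_1(x) + a(x)\bigl(1 + x^h + x^{2h} + \cdots + x^{(t-1)h}\bigr) \pmod p,
\]
by a trivial induction (multiply the previous congruence by $x^h$ and substitute once more). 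So modulo $p$ the element $q_1(x)$ (or, after reducing to degree $<\deg p$, its canonical representative $b$) reappears shifted by $th$ together with a ``digit tail'' built from copies of $a$.

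Next I would invoke the unicity of finite digit representations: if $(p,\mathcal{D})$ had the finiteness property, then every $\beta\in\mathcal{O}[x]$ — in particular the reduced representative $b$ of $q_1$ — would have a \emph{unique} representation $b\equiv \sum_{j=0}^{\ell-1} e_j x^j\pmod p$ with $e_j\in\mathcal{D}$ and $e_{\ell-1}\neq 0$ (or $b\equiv 0$). Unicity holds because $0\in\mathcal{D}$ and $\mathcal{D}$ is a complete residue system mod $p(0)$, exactly as recorded for the classical case in the introduction; the backward division map $T_p$ is deterministic. Now plug this representation of $b$ into the iterated congruence above. For $t$ chosen large enough that $th > \ell$, the right-hand side becomes a genuine digit string: the low-order part $\sum_{j<\ell} e_j x^j$ coming from $b$, and, since $a$ has coefficients in $\mathcal{D}$ and the blocks $x^{ih}a(x)$ for $i=0,\dots,t-1$ occupy disjoint ranges of exponents once we also arrange $h>\ell$ (which we may, replacing $h$ by a multiple — note \eqref{non-ECNSeq1} for $h$ implies the analogous relation for every multiple of $h$, with $q_1$ replaced by $q_1(1+x^h+\cdots)$ and similarly for $q_2$), the tail $a(x)(1+x^h+\cdots+x^{(t-1)h})$ is already a legitimate sequence of digits from $\mathcal{D}$. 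Hence we obtain a finite digit representation of $b$ whose length grows without bound in $t$ and whose leading digit is the (nonzero) leading coefficient of $x^{(t-1)h}a(x)$. Since $a\neq 0$ this leading digit is a genuine nonzero element of $\mathcal{D}$, so these are \emph{honest} representations of $b$ of unboundedly many distinct lengths, contradicting unicity. Therefore $(p,\mathcal{D})$ cannot have the finiteness property.

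The step that needs the most care is the bookkeeping that turns the congruence into an \emph{admissible} digit string: one must ensure the blocks $x^{ih}a(x)$ do not overlap (handled by passing to a multiple of $h$ so that $h>\deg a$, and using that \eqref{non-ECNSeq1} is stable under $h\mapsto rh$) and that the contribution of $b$ sits strictly below exponent $h$ (handled by taking $t$ large and, if necessary, first one more multiplication by $x^h$ to push $b$ itself into one clean low block — alternatively absorb $b$ into the $i=0$ block). A clean way to package this is to first prove the stability of \eqref{non-ECNSeq1} under replacing $h$ by any positive multiple, then assume WLOG $h > \deg p > \deg b$, and then the tail is automatically digit-admissible and disjoint from the $b$-block. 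Everything else — the induction on $t$, taking conjugates is not even needed here, and the appeal to unicity — is routine. Note this argument is essentially the converse direction of the reasoning in the proof of Proposition~\ref{Prop1} (case $|\alpha|=1$), where a root-of-unity relation produced exactly such a cyclic obstruction; here the relation is handed to us directly by hypothesis.
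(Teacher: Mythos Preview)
Your core idea — use the relation to exhibit a nontrivial periodicity and contradict uniqueness of digit expansions — is exactly the paper's idea. But your execution has a genuine gap at the ``bookkeeping'' step you yourself flag as delicate.

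The problem is the overlap you never actually remove. Your iterated congruence reads $x^{th}q_1 \equiv q_1 + a(1+x^h+\cdots+x^{(t-1)h})\pmod p$, and after replacing $q_1$ by its digit representation $b=\sum_{j<\ell}e_jx^j$ you declare the right-hand side a digit string. It is not: the $i=0$ copy of $a$ occupies positions $0,\dots,h-1$ and $b$ occupies positions $0,\dots,\ell-1$, so at positions $j<\ell$ the coefficient is $e_j+d_j$, which need not lie in $\mathcal D$. Passing to a multiple of $h$ does not help: when you multiply \eqref{non-ECNSeq1} by $1+x^h+\cdots+x^{(r-1)h}$ you get $\tilde a=(x^{rh}-1)q_1+\tilde q_2 p$ with the \emph{same} $q_1$ (your parenthetical ``$q_1$ replaced by $q_1(1+x^h+\cdots)$'' is incorrect), so the overlap between $b$ and the bottom block of $\tilde a$ persists. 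The ``one more multiplication by $x^h$'' suggestion shifts both $b$ and the $a$-blocks together and again leaves them overlapping. There is a second slip: even if the right side were a digit string, the congruence says it represents $x^{th}q_1$, not $b$; you cannot read off ``honest representations of $b$ of unboundedly many distinct lengths'' from it.

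The fix is a single sign change, and it is precisely what the paper does: rearrange the \emph{same} congruence as
\[
-q_1(x)\ \equiv\ a(x) + x^{h}\bigl(-q_1(x)\bigr)\pmod p,
\]
and iterate to $-q_1 \equiv a(1+x^h+\cdots+x^{(t-1)h}) + x^{th}(-q_1)$. Now the $a$-blocks occupy positions $0,\dots,th-1$ and the $(-q_1)$-term sits at positions $\ge th$: there is no overlap at all, and no multiple of $h$ or reduction of $q_1$ is needed. This exhibits $-q_1$ as a periodic point of the backward division map $T_p$ with period dividing $h$ and digit block $d_0,\dots,d_{h-1}$, not all zero; equivalently, if $(p,\mathcal D)$ had the finiteness property and $c$ were the digit representation of $-q_1$, then $a+x^h c$ would be a second, strictly longer digit representation of the same element, contradicting uniqueness. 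Either phrasing finishes the proof in one line.
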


\begin{proof}
Assume that \eqref{non-ECNSeq1} holds for some $h\in\mathbb{N}$, $d_0,d_1,\dots,d_{h-1} \in \mathcal{D}$ not all equal to zero and $q_1(x), q_2(x) \in \mathcal{O}[x]$. This implies that
$$
-q_1(x) \equiv \sum_{j=0}^{h-1} d_j x^j + x^{h} (-q_1(x)) \equiv \sum_{k=0}^{\ell-1} \sum_{j=0}^{h-1} d_j x^{kh+j} + x^{\ell h} (-q_1(x)) \pmod{p}
$$
holds for all $\ell \in\mathbb{N}$. Since $\mathcal{D}$ is a complete residue system modulo $p(0)$ this implies that a possible finite digit representation
$$
-q_1(x) \equiv \sum_{j =0}^{L-1} b_j x^j \pmod{p}
$$
must satisfy $L \ge h\ell$ for all $\ell\in\mathbb{N}$. Thus $L$ cannot be finite, a contradiction. This implies that $(p,\mathcal{D})$ does not have the finiteness property.
\end{proof}

Our main result in this section is the following theorem.

\begin{theorem} \label{non-ECNSmain}
Let $\mathbb{K}$ be a number field of degree $k$ and let $\mathcal{O}$ be an order in $\mathbb{K}$. Let a monic polynomial $p\in \mathcal{O}[x]$ and a bounded fundamental domain $\mathcal{F}$ for the action of $\mathbb{Z}^k$ on $\mathbb{R}^k$ containing $\mathbf{0}$ be given. Suppose that
$\mathbf{0}\in{\rm int}_{-}(\overline{\mathcal{F}}-\mathbf{e}_1)$.
There exists $M \in \mathbb{N}$ such that $(p(x-m),D_\mathcal{F})$ doesn't have the finiteness property for $m\ge M$.
\end{theorem}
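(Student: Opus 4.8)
The plan is to apply Lemma~\ref{non-ECNS2}: it suffices to produce, for all sufficiently large $m$, a number $h\in\N$ and digits $d_0,\dots,d_{h-1}\in D_{\mathcal{F},p(-m)(0)}$, not all zero, together with $q_1,q_2\in\mathcal{O}[x]$ satisfying
\[
\sum_{j=0}^{h-1} d_j x^j = (x^h-1)q_1(x) + q_2(x)\,p(x-m).
\]
Since the right-hand side vanishes whenever $x$ is an $h$-th root of unity, the natural choice is $h=1$: then $x^h-1=x-1$, and we want $d_0 = q_2(1)p(1-m)$ for some digit $d_0\neq 0$. Equivalently, taking $q_2$ constant, we need a \emph{nonzero} digit $d_0\in D_{\mathcal{F},p_0(-m)}$ that is divisible by $p(1-m)$ in $\mathcal{O}$; the cleanest instance is $d_0 = p(1-m)$ itself (with $q_1=0$, $q_2=1$), provided $p(1-m)\neq 0$ and $p(1-m)\in D_{\mathcal{F},p_0(-m)}$, where $p_0(-m)=p(-m)$ is the constant term of $p(x-m)$.

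So the heart of the argument is to show that $p(1-m)\in D_{\mathcal{F},p(-m)}=\big(p(-m)\cdot(\mathcal{F}\cdot\boldsymbol\omega)\big)\cap\mathcal{O}$ for large $m$, i.e.\ that
\[
\frac{p(1-m)}{p(-m)} = \mathbf{r}\cdot\boldsymbol\omega \quad\text{with}\quad \mathbf{r}\in\mathcal{F}.
\]
Here I would run the same asymptotic analysis as in the proof of Theorem~\ref{th:newKovacs}: writing $p(x-m)=\sum_j q_j(x-m)\omega_j$ with $q_j\in\Z[x]$, one has $p(-m)=\sum_j q_j(-m)\omega_j$ and $p(1-m)=\sum_j q_j(1-m)\omega_j$, and for $m\to\infty$ the dominant coefficient is $q_1(-m)$, which is negative for $m$ large (leading term $(-m)^n$ up to sign — more precisely $q_1(-m)\to\pm\infty$ and $q_1(1-m)/q_1(-m)\to 1$), while all other $q_j(-m)$ are of strictly smaller order. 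A short computation (multiplying out $\mathbf{r}\cdot\boldsymbol\omega \cdot p(-m) = p(1-m)$ and comparing coefficients, exactly as in the derivation of \eqref{eq:zc:1}–\eqref{eq:nbz}) then shows $\mathbf{r}\to\mathbf{e}_1\cdot\frac{q_1(1-m)}{q_1(-m)}\cdot(\dots)$; in fact $\mathbf{r}\to\mathbf{e}_1$ from the side $r_1\le 0$, i.e.\ $\mathbf{r}+\mathbf{e}_1\to\mathbf 0$ with first coordinate $\le 0$ for large $m$. By the hypothesis $\mathbf{0}\in{\rm int}_{-}(\overline{\mathcal{F}}-\mathbf{e}_1)$ there is an $\varepsilon$-neighborhood of $\mathbf 0$ in $\{r_1\le 0\}$ contained in $\overline{\mathcal F}-\mathbf e_1$, hence $\mathbf r\in\overline{\mathcal F}$ for $m$ large; a small extra argument (perturbing within the half-space, or using that $\mathbf r$ actually lands in the relative interior so that no boundary subtlety arises) upgrades this to $\mathbf r\in\mathcal F$, which is what the digit set requires.

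Once $p(1-m)\in D_{\mathcal F,p(-m)}$ is established, I must also check $p(1-m)\neq 0$: since $\deg p\ge 1$ and $p$ is monic, $p(1-m)=0$ for at most $\deg p$ values of $m$, so this holds for $m\ge M$ after enlarging $M$. Then Lemma~\ref{non-ECNS2} applies with $h=1$, $d_0=p(1-m)$, $q_1=0$, $q_2=1$, giving the conclusion. The main obstacle is the boundary issue: $D_{\mathcal{F}}$ is built from $\mathcal{F}$ itself, not its closure, whereas the hypothesis only controls $\overline{\mathcal{F}}-\mathbf e_1$; I expect to handle this by noting that the limit point $\mathbf r$ approaches $\mathbf 0$ strictly through the open half-space $\{r_1<0\}$ (because $q_1(1-m)\neq q_1(-m)$ forces $r_1\neq 0$ and the sign is controlled), so that $\mathbf r$ lands in ${\rm int}_-(\overline{\mathcal F}-\mathbf e_1)+\mathbf e_1\subseteq\mathcal F$ for $m$ large — this is the one place where the precise topological hypothesis on $\mathcal F$ is used, mirroring how the two interior conditions are exploited in Theorem~\ref{th:newKovacs}.
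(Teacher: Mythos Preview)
Your overall strategy is exactly the paper's: apply Lemma~\ref{non-ECNS2} with $h=1$ and digit $d_0=p(1-m)$ for the GNS polynomial $p(x-m)$, and show $p(1-m)\in D_{\mathcal F,p(-m)}$ by proving that the coordinate vector $\mathbf r$ of $p(1-m)/p(-m)$ converges to $\mathbf e_1$ with first coordinate strictly less than~$1$. The paper carries out the asymptotics via conjugates and Cramer's rule, obtaining $r_{m1}=1-\tfrac{n}{m}+O(m^{-2})$ and $r_{mj}=O(m^{-1})$ for $j\ge 2$, which is cleaner and more explicit than your sketch; in particular note $\mathbf r-\mathbf e_1\to\mathbf 0$, not $\mathbf r+\mathbf e_1\to\mathbf 0$.

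There is, however, a genuine slip in your application of Lemma~\ref{non-ECNS2}. You write $q_1=0$, $q_2=1$, but then the identity $d_0=(x-1)q_1(x)+q_2(x)\,p(x-m)$ would read $p(1-m)=p(x-m)$ as \emph{polynomials}, which is false for $n\ge 1$. Plugging in $x=1$ is only a necessary condition; the identity in Lemma~\ref{non-ECNS2} must hold in $\mathcal O[x]$. The fix is immediate and is precisely what the paper does: perform Euclidean division $p(x-m)=(x-1)s(x)+p(1-m)$, so that
\[
p(1-m)=(x-1)\bigl(-s(x)\bigr)+p(x-m),
\]
i.e.\ $q_1=-s$ (a polynomial of degree $n-1$) and $q_2=1$. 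With this correction your proof goes through along the same lines as the paper's.
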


\begin{proof}
For an integer $m$ set $\Pi_m(x) = p(x-m)$. In the sequel we examine the constant term of $\Pi_m(x)$, which is $\Pi_m(0) = p(-m)$. We claim that if $m$ is large enough then $\Pi_m(0)\in D_{\mathcal{F},p(-m-1)}$.

Assume that our claim is true. Performing Euclidean division of $\Pi_{m+1}(x)$ by $(x-1)$ we obtain a polynomial $s_{m+1}(x) \in \mathcal{O}[x]$ such that
$
\Pi_{m+1} (x) = (x-1) s_{m+1}(x) + \Pi_{m+1}(1).
$
As $\Pi_{m+1}(1) = p(-m)$ the last identity is equivalent to
$$
p(-m) = (x-1) (-s_{m+1}(x)) + \Pi_{m+1}(x).
$$
By the claim $p(-m)$ belongs to the digit set $D_{\mathcal{F},p(-m-1)}$ if $m$ is large enough. Applying Lemma~\ref{non-ECNS2} with $h=1, d_0=p(-m), q_1(x) = -s_{m+1}(x), q_2(x) =1$, $p(x) = \Pi_{m+1}(x)$ and $\mathcal{D}=D_{\mathcal{F},p(-m-1)}$ we conclude that $(\Pi_{m+1},D_{\mathcal{F},p(-m-1)})$ is not a GNS with finiteness property whenever $m$ is large enough.

It remains to prove the claim. Let $p(x) = x^n + p_{n-1} x^{n-1}+\dots+p_0$. Then
\begin{align*}
  \frac{\Pi_m^{(i)}(0)}{\Pi^{(i)}_{m+1}(0)}&-1 = \frac{p(-m)^{(i)}}{p(-m-1)^{(i)}}-1 \\
   &= \frac{(-m)^n + p_{n-1}^{(i)}(-m)^{n-1}- (-m-1)^n-p_{n-1}^{(i)}(-m-1)^{n-1}+ O(m^{n-2})}{ (-m-1)^n+p_{n-1}^{(i)}(-m-1)^{n-1}+ O(m^{n-2})}\\
  &=\frac{-(-1)^n n m^{n-1}+O(m^{n-2})}{(-m)^n + O(m^{n-1})}\\
  &= -\frac{n}{m}+O(m^{-2}),
\end{align*}
hence,
\begin{equation}\label{eq:pipi0}
\frac{\Pi_m^{(i)}(0)}{\Pi^{(i)}_{m+1}(0)} = 1- \frac{n}{m}+O(m^{-2})
\end{equation}
for $i=1,\dots,k$.

Setting
\begin{equation}\label{eq:new5.3}
\frac{\Pi_m(0)}{\Pi_{m+1}(0)} = \sum_{j=1}^{k} r_{mj} \omega_j,
\end{equation}
by the definition of $D_{\mathcal{F},p(-m-1)}=D_{\mathcal{F},\Pi_{m+1}(0)}$ our claim is proved if we show that $(r_{m1},\ldots,r_{mk})\in \mathcal{F}$ holds for large $m$. (Note that, as $\Pi_m(0)/\Pi_{m+1}(0)$ belongs to $\K$, we have $r_{m1},\dots,r_{mk}\in \Q$.)

Taking conjugates, equation \eqref{eq:new5.3} implies
\begin{equation}\label{eq:pipi}
\frac{\Pi_m^{(i)}(0)}{\Pi^{(i)}_{m+1}(0)} = \sum_{j=1}^{k} r_{mj} \omega_j^{(i)}, \;i=1,\dots,k.
\end{equation}
This is a system of linear equations in the unknowns $r_{mj}, j=1,\dots,k$ with coefficient matrix $(\omega_j^{(i)})_{i,j=1,\dots,k}$. As $\omega_1,\dots,\omega_k$ is a basis of $\mathcal{O}$ the determinant of $(\omega_j^{(i)})$ is not zero. Moreover, as $\omega_1=1$ the first column of $(\omega_j^{(i)})$ is $\bf 1$, the vector which consists only of ones.

We estimate the solutions of \eqref{eq:pipi} by using Cramer's rule. If $j>1$ then to get the matrix of the numerator  we have to replace the $j$-th column of $(\omega_j^{(i)})$ by the vector $\left(\frac{\Pi_m^{(1)}(0)}{\Pi^{(1)}_{m+1}(0)}, \dots, \frac{\Pi_m^{(k)}(0)}{\Pi^{(k)}_{m+1}(0)}\right)={\bf 1} (1- \frac{n}{m} +O(m^{-2}))$. As the first column of $(\omega_j^{(i)})$ was not altered, {\it i.e.}, it is $\bf 1$, the determinant of this matrix is $O(m^{-1})$. As the determinant of the denominator matrix is constant, {\it i.e.}, $\det(\omega_j^{(i)})$, we get
\begin{equation}\label{eq:r2}
 r_{mj} = O(m^{-1}),\; j=2,\dots,k.
\end{equation}
If $j=1$ then to get the matrix of the numerator  we have to replace the first column of $(\omega_j^{(i)})$ by the vector $\left(\frac{\Pi_m^{(1)}(0)}{\Pi^{(1)}_{m+1}(0)}, \dots, \frac{\Pi_m^{(k)}(0)}{\Pi^{(k)}_{m+1}(0)}\right) ={\bf 1} (1- \frac{n}{m} +O(m^{-2}))$, thus
$$
r_{m1} = 1-\frac{n}{m} +O(m^{-2}).
$$

This yields that
\begin{equation}\label{eq:r3}
1- \frac{n}{2m} < r_{m1} < 1
\end{equation}
holds for $m$ large.
Thus, since $\mathbf{0}\in{\rm int}_{-}(\overline{\mathcal{F}}-\mathbf{e}_1)$ by assumption, \eqref{eq:r2}, and \eqref{eq:r3} impliy that $(r_{m1},\ldots,r_{mk})\in \mathcal{F}$ for large $m$ and, hence, $\Pi_m(0)\in D_{\mathcal{F},p(-m-1)}$  for large $m$, and the claim is proved.
\end{proof}

\section{GNS in number fields}\label{sec:6}

As mentioned in the introduction, the theory of generalized number systems started with investigations in the ring of integers of algebraic number fields. For an overview on related results we refer to Evertse and Gy\H{o}ry~\cite{Evertse_Gyory} and Brunotte, Huszti, and Peth\H{o} \cite{BHP}. To clarify the connection of our investigations with the earlier ones we need some definitions. Let $\mathbb{L}$ be a number field of degree $l$, and denote its ring of integers by $\mathcal{O}_{\mathbb{L}}$. Let $\alpha \in \mathcal{O}_{\mathbb{L}}$ and let $\mathcal{N}$ be a complete residue system modulo $\alpha$ containing $0$. The pair $(\alpha,\mathcal{N})$ is called a {\it number system in $\mathcal{O}_{\mathbb{L}}$}. If for each $\gamma \in \mathcal{O}_{\mathbb{L}}$ there exist integers $\ell\ge 0$, $d_0,\dots,d_{\ell-1}\in \mathcal{N}$ such that
$$
\gamma = \sum_{j=0}^{\ell-1}d_j \alpha^j
$$
then we say that $(\alpha,\mathcal{N})$ has the {\em finiteness property}.
If the digit set is chosen to be $\mathcal{N}=\mathcal{N}_0(\alpha)=\{0,1,\dots,|N_{\mathbb{L}/\mathbb{Q}}(\alpha)|-1\}$ then $(\alpha,\mathcal{N})$ is called a {\it canonical number system in $\mathcal{O}_{\mathbb{L}}$} (CNS for short). (For $p$ being an irreducible polynomial with root $\alpha$ and $\mathcal{O}_{\mathbb{L}}=\mathbb{Z}[\alpha]$, the finiteness property of $(\alpha,\mathcal{N})$ coincides with the finiteness property of $(p,\mathcal{N})$ according to the introduction.) Kov\'acs \cite{Kovacs} proved that there exists a canonical number system with finiteness property in $\mathcal{O}_{\mathbb{L}}$ if and only if $\mathcal{O}_{\mathbb{L}}$ admits a power integral bases. Later Kov\'acs and Peth\H{o} \cite{KovacsPetho} proved the stronger result.

\begin{proposition}[{Kov\'acs and Peth\H{o} \cite[Theorem~5]{KovacsPetho}}]
  Let $\mathcal{O}$ be an order in the algebraic number field $\mathbb{L}$. There exist $\alpha_1,\dots,\alpha_t \in \mathcal{O}$, $n_1,\dots,n_t \in \mathbb{Z}$, and $N_1,\dots,N_t$ finite subsets of $\mathbb{Z}$, which are all effectively computable, such that $(\alpha,\mathcal{N}_0(\alpha))$ is a canonical number system with finiteness property in $\mathcal{O}$ if and only if $\alpha=\alpha_i-h$ for some integers $i,h$ with $1\le i \le t$ and either $h\ge n_i$ or $h\in N_i$.
\end{proposition}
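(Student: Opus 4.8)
The plan is to obtain the proposition by combining three ingredients that are already available: Gy\H{o}ry's effective finiteness theorem for power integral bases~\cite{Gyory}, which yields the elements $\alpha_1,\dots,\alpha_t$; the positive result Theorem~\ref{th:newKovacs} of Section~\ref{sec:largegns}, which yields the bounds $n_i$; and the non-finiteness criterion of Lemma~\ref{non-ECNS2}, which bounds the exceptional ranges of shifts from below. The finitely many remaining shifts are then decided by the decision procedure of Theorem~\ref{non-ECNS1}, producing the sets $N_i$.

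First I would pin down the shape of an admissible $\alpha$. If $(\alpha,\mathcal{N}_0(\alpha))$ has the finiteness property in $\mathcal{O}$, then every representable element lies in $\mathbb{Z}[\alpha]$ because $\mathcal{N}_0(\alpha)\subset\mathbb{Z}$; hence $\mathcal{O}\subseteq\mathbb{Z}[\alpha]$, and since $\alpha\in\mathcal{O}$ this forces $\mathbb{Z}[\alpha]=\mathcal{O}$. In particular $\alpha$ generates $\mathbb{L}/\mathbb{Q}$, so its minimal polynomial is monic and irreducible of degree $l$. By Gy\H{o}ry's theorem the generators of power integral bases of $\mathcal{O}$ form, modulo translation by $\mathbb{Z}$, a finite and effectively computable set; fix representatives $\alpha_1,\dots,\alpha_t$, so that every admissible $\alpha$ equals $\alpha_i-h$ for some $1\le i\le t$ and $h\in\mathbb{Z}$. (If $\mathcal{O}$ is not monogenic, then $t=0$ and the statement is vacuously true.) For fixed $i$ let $p_i\in\mathbb{Z}[x]$ be the minimal polynomial of $\alpha_i$; then $\alpha_i-h$ has minimal polynomial $p_i(x+h)$, constant term $p_i(h)$, and $\mathcal{N}_0(\alpha_i-h)=\{0,\dots,|p_i(h)|-1\}$. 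Since $\mathbb{Z}[x]/(p_i(x+h))\cong\mathbb{Z}[\alpha_i-h]=\mathbb{Z}[\alpha_i]=\mathcal{O}$, the finiteness property of $(\alpha_i-h,\mathcal{N}_0(\alpha_i-h))$ in $\mathcal{O}$ is equivalent to that of the GNS $(p_i(x+h),\{0,\dots,|p_i(h)|-1\})$ over $\mathbb{Z}$, and it is this GNS that I would analyse as a function of $h$.

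For large positive $h$ I would take $n_i$ so large that $p_i(h)>0$ for all $h\ge n_i$ (possible since $p_i$ is monic) and that $h$ also exceeds the threshold supplied by Theorem~\ref{th:newKovacs} applied over $\mathbb{Z}$ with $p=p_i$ and $\mathcal{F}=[0,1)$; here $\mathcal{F}\cup(\mathcal{F}-\mathbf{e}_1)=[-1,1)$ and $\mathbf{0}\in{\rm int}_+([0,1))$, so both hypotheses of that theorem hold. For such $h$ the digit set $D_{[0,1),p_i(h)}$ equals $\{0,\dots,p_i(h)-1\}=\mathcal{N}_0(\alpha_i-h)$ (compare Example~\ref{ex:ex}), so Theorem~\ref{th:newKovacs} gives the finiteness property. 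For very negative $h$ I would invoke Lemma~\ref{non-ECNS2} with a single nonzero digit: setting $m=-h$ and dividing $p_i(x-m)$ by $x-1$ gives $p_i(1-m)=(x-1)q(x)+p_i(x-m)$ with $q\in\mathbb{Z}[x]$, and after possibly negating we get $|p_i(1-m)|=(x-1)q_1(x)+q_2(x)\,p_i(x-m)$ with $q_1\in\mathbb{Z}[x]$, $q_2\in\{1,-1\}$. For $m$ larger than an effective bound depending only on $p_i$ (bounding its real zeros) one has $0<|p_i(1-m)|<|p_i(-m)|$, so $|p_i(1-m)|$ is a nonzero element of $\mathcal{N}_0(\alpha_i+m)=\{0,\dots,|p_i(-m)|-1\}$, and Lemma~\ref{non-ECNS2} shows that $(\alpha_i+m,\mathcal{N}_0(\alpha_i+m))$ fails the finiteness property; this yields an effective $M_i$ with failure of finiteness for all $h\le-M_i$. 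Finally, for the finitely many shifts $-M_i<h<n_i$ I would decide the finiteness property using Theorem~\ref{non-ECNS1} and set $N_i$ equal to the set of those $h$ for which it holds; then $(\alpha_i-h,\mathcal{N}_0(\alpha_i-h))$ has the finiteness property exactly when $h\ge n_i$ or $h\in N_i$, and collecting these data over $i=1,\dots,t$ gives the proposition.

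The point I expect to be most delicate is the interplay between the fixed digit set $\mathcal{N}_0(\alpha)=\{0,\dots,|N(\alpha)|-1\}$ and the sign of the constant term $p_i(h)$: a negative constant term corresponds to the fundamental domain $(-1,0]$ rather than $[0,1)$ (again see Example~\ref{ex:ex}), so neither Theorem~\ref{th:newKovacs} nor Theorem~\ref{non-ECNSmain} can be applied verbatim over the whole range of $h$. This is why the positive regime is run only once $p_i(h)>0$, and why the negative regime is handled through the sign-insensitive Lemma~\ref{non-ECNS2}, whose conclusion requires no hypothesis on the fundamental domain, instead of through Theorem~\ref{non-ECNSmain}. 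The other point requiring attention is effectivity: $n_i$ must be extracted by making explicit the implied constants in the proof of Theorem~\ref{th:newKovacs} (which depend only on $k$ and $H(p_i)$), $M_i$ is effective via bounds on the zeros of $p_i$, and $N_i$ is effective by Theorem~\ref{non-ECNS1}; together with the effectivity in Gy\H{o}ry's theorem this makes $\alpha_1,\dots,\alpha_t$, $n_1,\dots,n_t$ and $N_1,\dots,N_t$ all effectively computable.
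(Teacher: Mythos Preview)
The paper does not supply its own proof of this proposition: it is quoted verbatim as \cite[Theorem~5]{KovacsPetho} and immediately used as input for the subsequent Theorem~\ref{thm:newKovacsPetho}. So there is nothing in the manuscript to compare your argument against.

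That said, your reconstruction is correct and is essentially a re-derivation of the Kov\'acs--Peth\H{o} result through the machinery built in the present paper. The three ingredients you isolate are exactly the right ones: Gy\H{o}ry's theorem forces any admissible $\alpha$ to equal some $\alpha_i-h$; Theorem~\ref{th:newKovacs} with $k=1$ and $\mathcal{F}=[0,1)$ handles $h\ge n_i$ once $p_i(h)>0$ (your verification of the two topological hypotheses on $\mathcal{F}$ is accurate, cf.\ Example~\ref{ex:ex} and Remark~\ref{rem:45}); and Theorem~\ref{non-ECNS1} settles the bounded range. Your treatment of large negative $h$ via a direct period-one application of Lemma~\ref{non-ECNS2} is the right move and in fact mirrors the proof of Theorem~\ref{non-ECNSmain}; you are also right that Theorem~\ref{non-ECNSmain} itself cannot be invoked blindly here, since for odd degree $l$ the constant term $p_i(-m)$ is negative, the digit set $\mathcal{N}_0(\alpha_i+m)$ corresponds to $\mathcal{F}=(-1,0]$ rather than $[0,1)$, and the hypothesis $\mathbf{0}\in{\rm int}_-(\overline{\mathcal{F}}-\mathbf{e}_1)$ then fails. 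Your remark on effectivity is well placed: the thresholds in Theorem~\ref{th:newKovacs} come from explicit inequalities on $\varepsilon$ and the coefficients of $p_i$, and the bound $M_i$ from $|p_i(1-m)|<|p_i(-m)|$ is elementary.
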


From Corollary \ref{cor:newKovacs} we derive that for number systems the relation is usually stronger, the theorem of Kov\'acs and Peth\H{o} describes a kind of ``boundary case'' {\it viz.} a case where $0\in\partial\mathcal{F}$.

\begin{theorem} \label{thm:newKovacsPetho}
Let $\mathbb{L}$ be a number field of degree $l$ and let $\mathcal{O}$ be an order in $\mathbb{L}$. Let $\mathcal{F}$ be a bounded fundamental domain for the action of $\mathbb{Z}$ on $\mathbb{R}$. If $0 \in {\rm int}(\mathcal{F})$ then all but finitely many generators of power integral bases of $\mathcal{O}$ form a basis for a number system with finiteness property. Moreover, the exceptions are effectively computable.
\end{theorem}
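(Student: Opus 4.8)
The plan is to deduce the statement from Corollary~\ref{cor:newKovacs}, applied with base field $\Q$ and base order $\Z$ (so that the integer $k$ of the earlier sections equals $1$ and $\omega_1=1$), combined with the effective finiteness theorem of Gy\H{o}ry on power integral bases. First I would translate the claim into the GNS language of the paper. If $\mathcal{O}$ is not monogenic there is nothing to prove, so assume $\alpha\in\mathcal{O}$ satisfies $\Z[\alpha]=\mathcal{O}$; let $p_\alpha\in\Z[x]$ be its minimal polynomial, which is monic, irreducible, of degree $l$, with $|p_\alpha(0)|=|N_{\mathbb{L}/\Q}(\alpha)|$ and $\Z[x]/(p_\alpha)\cong\mathcal{O}$. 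As recalled at the start of Section~\ref{sec:6}, the number system $(\alpha,D_{\mathcal{F}})$ in $\mathcal{O}$ then has the finiteness property if and only if the GNS $(p_\alpha,D_{\mathcal{F}})$ over $\Z$ does, where $D_{\mathcal{F}}=D_{\mathcal{F},p_\alpha(0)}\subset\Z$ and $\mathcal{F}\subset\R$ is the given bounded fundamental domain for the action of $\Z$ on $\R$. So it suffices to treat these $\Z$-GNS.

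Next I would bring in the structure of the generators. By Gy\H{o}ry~\cite{Gyory} (see also \cite{KovacsPetho,Evertse_Gyory}) there are effectively computable $\alpha_1,\dots,\alpha_t\in\mathcal{O}$ such that $\Z[\alpha]=\mathcal{O}$ holds for $\alpha\in\mathcal{O}$ if and only if $\alpha=\alpha_i-m$ for some $i\in\{1,\dots,t\}$ and some $m\in\Z$; and if $p_i\in\Z[x]$ denotes the minimal polynomial of $\alpha_i$ then the minimal polynomial of $\alpha_i-m$ is $p_i(x+m)$. I would then apply Corollary~\ref{cor:newKovacs} to each of the finitely many $p_i$ with the given $\mathcal{F}$. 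Its hypothesis $\mathbf{0}\in{\rm int}(\mathcal{F})$ is precisely our hypothesis in the case $k=1$, and, since there are no coordinates $m_2,\dots,m_k$, its side condition collapses to $|m|>\eta_i^{-1}$. Hence for each $i$ there is $\eta_i>0$ such that $(p_i(x+m),D_{\mathcal{F}})$, equivalently $(\alpha_i-m,D_{\mathcal{F}})$, has the finiteness property whenever $|m|>\eta_i^{-1}$. Consequently every generator of a power integral basis of $\mathcal{O}$ that does not lie in the finite set $E:=\{\alpha_i-m \;:\; 1\le i\le t,\; |m|\le\eta_i^{-1}\}$ forms a basis of a number system with the finiteness property, which proves the first assertion.

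For the effectivity statement I would argue that $E$ is explicitly computable and then screen its members. The $\alpha_i$, hence the $p_i$, are effectively computable by Gy\H{o}ry's theorem. Inspecting the proof of Corollary~\ref{cor:newKovacs}, and through it that of Theorem~\ref{th:1}, shows that each $\eta_i$ may be chosen effectively: the radii $\varepsilon_1$ (from $\mathbf{0}\in{\rm int}(\mathcal{F})$) and $\varepsilon_2$ (from local finiteness of the tiling $\mathcal{F}+\Z=\R$) are computable from a description of $\mathcal{F}$, and the implied constants in the estimates \eqref{eq:pc1}--\eqref{eq:pc2} are explicit in the coefficients of $p_i$. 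Thus $E$ is an explicitly given finite set, and for each $\beta\in E$ one decides, by the algorithm of Theorem~\ref{non-ECNS1}, whether $(p_\beta,D_{\mathcal{F}})$ has the finiteness property. The exceptions are exactly those $\beta\in E$ for which the answer is negative, so they form an effectively computable set.

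The whole conceptual content lies in Corollary~\ref{cor:newKovacs} and in Gy\H{o}ry's theorem, so the rest is bookkeeping. The one step that genuinely needs care — and which I expect to be the main, albeit routine, obstacle — is verifying that the constant $\eta$ furnished by Corollary~\ref{cor:newKovacs} is effective, i.e.\ that all the $\varepsilon$'s and implied constants appearing in its proof, and in that of Theorem~\ref{th:1} behind it, are computable from $\mathcal{F}$ and the polynomial in question.
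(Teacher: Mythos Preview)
Your proposal is correct and follows essentially the same route as the paper: Gy\H{o}ry's finiteness theorem reduces to finitely many families $\alpha_i-m$, Corollary~\ref{cor:newKovacs} (in the $k=1$ case, via Remark~\ref{rem:44}) handles all but finitely many $m$ in each family, and Theorem~\ref{non-ECNS1} disposes of the remaining cases. The only difference is one of emphasis: you argue explicitly for the effectivity of the constants $\eta_i$, whereas the paper leaves this implicit and simply appeals to Theorem~\ref{non-ECNS1} for the residual finite set.
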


\begin{proof}
By Gy\H{o}ry \cite{Gyory} in $\mathcal{O}$ there exist up to translations by integers only finitely many generators of power integral bases, and they are effectively computable. Denote these finitely many generators by $\alpha_1,\dots,\alpha_t$ and denote the minimal polynomial of $\alpha_j$ by $p_j(x)$, $j=1,\dots,t$. Fix $1\le j \le t$. Note that $p_j(x)$ is monic and has rational integer coefficients. By Corollary~\ref{cor:newKovacs} (see especially Remark~\ref{rem:44}) there exists $M_j\in \Z$, such that $(p_j(x\pm m), \mathcal{F})$ is a GNS with finiteness property for all $m>M_j$. Fix such an $m$ and its sign $\delta$ too. Denote by $\mathcal{D}= D_{\mathcal{F},p_j(\delta m)}$ the digit set corresponding to $\mathcal{F}$ and $p_j(\delta m)$. Notice that $\mathcal{D}\subset \Z$ is a complete residue system modulo $p_j(\delta m)$.

Because of the finiteness property there exist for any $\gamma \in \mathcal{O}$ digits $d_0,\dots,d_{\ell-1} \in \mathcal{D}$ such that
$$
\gamma \equiv d_0+ d_1x+\dots +d_{\ell-1} x^{\ell-1} \pmod{p_j(x+\delta m)}.
$$
Inserting $\alpha_j - \delta m$ into this congruence and taking into consideration that $\alpha_j - \delta m$ is a zero of $p_j(x+\delta m)$ we obtain
$$
\gamma = d_0+ d_1(\alpha_j - \delta m)+\dots +d_{\ell-1} (\alpha_j - \delta m)^{\ell-1},
$$
hence, the pair $(\alpha_j - \delta m,\mathcal{D})$ is a number system with finiteness property in $\mathcal{O}$ provided that $m>M_j$.

If $1,\alpha,\dots,\alpha^{l-1}$ is a power integral basis of $\mathcal{O}$ then by Gy\H{o}ry's theorem there exist $1\le j \le t, \delta = \pm 1, m\in \N$ such that $\alpha = \alpha_j - \delta m$. We have seen in the last paragraph that all but finitely many $m$ the numbers $\alpha_j - \delta m$ together with the digit sets $\mathcal{D}$ form a number system with finiteness property.

Finally for each of the finitely many remaining values of $m$ one can decide algorithmically the finiteness property by Theorem~\ref{non-ECNS1}.
\end{proof}

\begin{rem}
Notice that the assumption $0\in {\rm int}(\mathcal{F})$ implies that $\{-1,0,1\} \subseteq D_{\mathcal{F},p_j(\delta m)}$ for all $m$ large enough. Of course $-1 \notin \mathcal{N}_0(\alpha+m)$, hence, the proof of Theorem~\ref{thm:newKovacsPetho} does not work in the case of canonical number systems.
\end{rem}

\begin{rem}
  Gy\H{o}ry's theorem holds for relative extensions as well. More precisely, if $\mathcal{O}$ is an order in an algebraic number field $\mathbb{K}$ and $p \in \mathcal{O}[x]$ is monic and irreducible then $\mathbb{L}= \mathcal{O}[x]/p\mathcal{O}[x]$ is a finite extension field of $\mathbb{K}$. Gy\H{o}ry \cite{Gyory} proved that if $\mathcal{U}$ is a ring and a free $\mathcal{O}$-module in $\mathbb{L}$, then it admits finitely many classes of $\mathcal{O}$-power integral bases. A representative of each class is effectively computable. Each class is closed under translation by elements of $\mathcal{O}$. To generalize Theorem~\ref{thm:newKovacsPetho} to this situation would require the generalization of Remark~\ref{rem:44} to all $m \in \mathcal{O}$, such that all conjugates of $m$ are large enough. We have no idea how to prove such a result.
\end{rem}

{\bf Acknowledgement} {\it We are very indebted to Professor K\'alm\'an Gy\H{o}ry. He suggested us to work out the GNS concept for orders of any number field, and not only for Euclidean number fields as we intended to do. He encouraged us continuously during our investigations. We also thank the anonymous referees for their thorough reading of the manuscript and their essential remarks, which made the quality of the presentation much better.}

%

\bibliographystyle{siam}
\bibliography{orderCNS}

\end{document}